\newtheorem{thm}{Theorem}[section]
\newtheorem*{thma}{Theorem~A}
\newtheorem*{thmb}{Theorem~B}
\newtheorem*{thmap}{Theorem~A'}
\newtheorem{lemma}[thm]{Lemma}
\newtheorem{cor}[thm]{Corollary}
\newtheorem{claim}{Claim}[thm]
\theoremstyle{definition}
\newtheorem{defn}[thm]{Definition}
\newtheorem{notation}[thm]{Notation}
\theoremstyle{remark}
\newtheorem{remark}[thm]{Remark}
\newtheorem{conv}[thm]{Convention}
\DeclareMathOperator{\ch}{\sf CH}
\DeclareMathOperator{\MA}{\sf MA}
\DeclareMathOperator{\zfc}{\sf ZFC}
\DeclareMathOperator{\gch}{\sf GCH}
\DeclareMathOperator{\pr}{Pr}
\DeclareMathOperator{\im}{Im}
\DeclareMathOperator{\otp}{otp}
\DeclareMathOperator{\dom}{dom}
\DeclareMathOperator{\acc}{acc}
\DeclareMathOperator{\non}{non}
\newcommand{\s}{\subseteq}
\newcommand{\br}{\blacktriangleright}
\newcommand{\forcingname}[1]{\dot #1}
\newcommand{\one}{\mathop{1\hskip-3pt {\rm l}}}
\newcommand{\forces}[2]{\Vdash_{#1} \mbox{``} #2 \mbox{''}}
\newcommand{\proo}{\prz{\aleph_1}{\aleph_0}{1}{\aleph_1}{\aleph_0}}
\newcommand{\notimplies}{\mathrel{{\ooalign{\hidewidth$\not\phantom{=}$\hidewidth\cr$\implies$}}}}
\newcommand{\prz}[5]{\pr_0(#1,\allowbreak\faktor{{\scriptstyle{#2\circledast#1}}}{ {}^{#3\circledast#1}},\allowbreak#4,#5)}
\newcommand{\pro}[5]{\pr_1(#1,\allowbreak\faktor{{\scriptstyle{#2\circledast#1}}}{ {}^{#3\circledast#1}},\allowbreak#4,#5)}
\newcommand{\proon}{\pr_1(\aleph_1,\faktor{{\scriptstyle{\aleph_0\circledast\aleph_1}}}{ {}^{1\circledast\aleph_1}},\aleph_1,\aleph_0)}
\renewcommand\mid{\mathrel{|}\allowbreak}
\subjclass[2010]{Primary 03E02; Secondary 03E35, 03E17}
\keywords{Partition relations, Strong colorings, Cochromatic number, {S}ierpinski's onto mapping, Luzin set, nonmeager ideal.}
\title[Strongly Luzin sets and partition relations]{Ramsey theory over partitions III:\\Strongly Luzin sets and partition relations}
\author[M. Kojman]{Menachem Kojman}
\address{Department of Mathematics, Ben-Gurion University of the Negev, P.O.B. 653, Be’er Sheva, 84105 Israel}
\urladdr{https://www.math.bgu.ac.il/~kojman/}
\author[A. Rinot]{Assaf Rinot}
\address{Department of Mathematics, Bar-Ilan University, Ramat-Gan 5290002, Israel.}
\urladdr{http://www.assafrinot.com}
\author[J. Stepr\={a}ns]{Juris Stepr\={a}ns}
\address{Department of Mathematics \& Statistics, York University, 4700 Keele Street, Toronto, Ontario, Canada M3J 1P3}
\urladdr{http://www.math.yorku.ca/~steprans/}
\date{Preprint as of April 16, 2022. For the latest version, visit \textsf{http://assafrinot.com/paper/55}.}
\begin{document}
\begin{abstract} 
The strongest  type of  coloring of pairs of countable ordinals, gotten by 
Todor{\v{c}}evi{\'c} from a strongly Luzin set,
is shown to be equivalent to the existence of a nonmeager set of reals of size $\aleph_1$.
In the other direction, it is shown that the existence of both a strongly Luzin set and a coherent Souslin tree 
is compatible with the existence of a countable partition of pairs of countable ordinals such that no coloring is strong over it.

This clarifies the interaction between a gallery of coloring assertions
going back to Luzin and Sierpi\'nski a hundred years ago.
\end{abstract}
\maketitle

\section{Introduction}

Each of the  following propositions is a consequence of Cantor's Continuum Hypothesis ($\ch$):
\begin{itemize}
\item[(M)] There is a nonmeager set of reals of size $\aleph_1$;
\item[(L)] There is an uncountable set of reals whose
  intersection with every meager set is countable;
\item[(L*)] There is an uncountable set of reals $X$ such that, for every positive integer $d$,
and every meager subset $Y$ of $\mathbb R^d$, the intersection $Y\cap X^d$
contains no uncountable pairwise disjoint subfamily;\footnote{Two $d$-tuples $(p_1,\ldots,p_d)$ and $(q_1,\ldots,q_d)$ are understood to be \emph{disjoint} iff $\{ p_1,\ldots,p_d\}\cap\{q_1,\ldots,q_d\}\neq\emptyset$.}
\item[(S)] There is a sequence $\langle f_n\mid n\in\mathbb N\rangle$ of
  functions from from $\aleph_1$ to $\aleph_1$ such that for every
  uncountable $I\s\aleph_1$, for all but finitely many $n$'s,
  $f_n[I]=\aleph_1$;
\item[(EHM)] There is a coloring $c:[\aleph_1]^2\rightarrow\aleph_1$
  such that for every infinite $A\subseteq \aleph_1$ and every
  uncountable $B\subseteq \aleph_1$ there is $\alpha\in A$ such
  that $c[\{\alpha\}\times B]=\aleph_1$;
\item[(G)] There is a coloring $c:[\aleph_1]^2\rightarrow2$
  such that for every uncountable pairwise disjoint family $\mathcal B\s[\aleph_1]^{<\aleph_0}$
  and every $\delta<2$, there are $a,b\in\mathcal B$ with $\max(a)<\min(b)$ such that $c[a\times b]=\{\delta\}$;
\item[(T)] There is a coloring $c:[\aleph_1]^2\rightarrow\aleph_1$
  such that for all $k,l<\omega$,  
  for every infinite pairwise disjoint family $\mathcal A\s[\aleph_1]^{k}$
  and every uncountable pairwise disjoint family $\mathcal B\subseteq[\aleph_1]^{l}$
  there is $a\in\mathcal A$ such that 
  for every function $f:k\times l\rightarrow\aleph_1$, there is
  $b\in\mathcal B$ such that $c(a(i),b(j))=f(i,j)$ for all $(i,j)\in k\times l$.
\end{itemize}

 (L) was derived from $\ch$ by
Mahlo and independently by Luzin around 1913;
such a set of reals is called a \emph{Luzin set}.
 (L*) was derived by Todor{\v{c}}evi{\'c} \cite[p.~51]{TodPartTop},
and such a set was named \emph{strongly Luzin}.
 (S) was derived by Sierpi\'nski in 1932, and may be found in his monograph \cite{sierpinski1934hypothese}.
 (EHM) was derived by Erd\H{o}s, Hajnal and Milner in 1966 \cite{EHM}
 and (G) was derived by Galvin in 1980 \cite{galvin}.
A special case of (T) in which either $k$ or $l$ is $1$ and the number of colors is just $2$ was gotten from $\ch$ by Hajnal and Juh\'asz \cite{MR336705} in their work on HFC and HFD spaces from the 1970's .

Evidently, (L*)$\implies$(L)$\implies$(M), (T)$\implies$(G) 
and (T)$\implies$(EHM)$\implies$(S). 
In 1980, Shelah \cite{Sh:100,JdSh:478} established that (M)$\notimplies$(L). 
In 1987, Todor{\v{c}}evi{\'c}  \cite[pp.~290--291]{TodActa} proved that (L)$\implies$(S)$\iff$(EHM).
In 1989, Todor{\v{c}}evi{\'c} \cite[Proposition~6.4]{TodPartTop} proved that (L*)$\implies$(T).
Recently, Miller \cite{Miller} proved (S)$\implies$(M) and 
 Guzm\'an \cite{MR3694336} proved (M)$\implies$(S), establishing (M)$\iff$(S).

The first main result of this paper expands this circle of equivalences:

\begin{thma} (M)$\iff$(T).
\end{thma}

Proposition (T), 
which in the language of Definition~\ref{prelations} below is
denoted $$\proo$$ asserts the existence of an extremely strong coloring,
yet one can ask for even more. 
The notion of a strong coloring \emph{over a partition} $p$  was introduced
recently in \cite{strongcoloringpaper},
where it was shown that for every strong coloring $c:[\aleph_1]^2\rightarrow\aleph_1$
there is a partition $p:[\aleph_1]^2\rightarrow 2$ 
such that $c$ is not strong over $p$. Nevertheless, by \cite[Lemma~9]{strongcoloringpaper}, 
if the space of strong colorings which witness $\proo$ is non-empty, then for every partition
$p:[\aleph_1]^2\to \mu$ with $(\aleph_1)^\mu=\aleph_1$ there is a strong
coloring which witnesses it over $p$; the existence of such a coloring is denoted by $$\proo_p .$$

Altogether, by Theorem~A, (M) implies $\proo_p$ for all  partitions $p:[\aleph_1]^2\rightarrow \theta$ with a finite $\theta$, 
and $\ch$ implies $\proo_p$ also for $\theta=\aleph_0$.
By Theorem~\ref{ThePr0Cohen} below, $\proo_p$ can hold for all $p:[\aleph_1]^2\rightarrow \aleph_0$ also in models with an arbitrarily large continuum.

It is natural to ask, then, whether (M)  implies $\proo_p$ for all
countable  $p$. 
We prove in Section~\ref{nonmeager} below:

\begin{thmap} (M) is equivalent to $\proo_p$  for all 
$\ell_\infty$-coherent partitions $p:[\aleph_1]^2\rightarrow\aleph_0$.
\end{thmap}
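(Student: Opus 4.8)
The plan is to prove the two implications separately, with all the work concentrated in ``(M)$\implies$ the partition relation''. For the converse, note that a trivial $\ell_\infty$-coherent partition $p$ (e.g.\ a constant one) is admissible, and that $\proo_p$ for such a $p$ reduces to the partition-free relation $\proo$, whose witness is in particular a strong coloring $c:[\aleph_1]^2\to\aleph_1$ in the classical sense; such a coloring yields (EHM), hence (S), hence (M) by the Miller implication (S)$\implies$(M) recorded in the introduction. So no coherence is needed in that direction, and the interesting content is the forward one.

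For the forward direction I would start from (M)$\iff$(S) and fix a Sierpi\'nski sequence $\langle f_n\mid n\in\mathbb N\rangle$ with $f_n[I]=\aleph_1$ for all but finitely many $n$, for every uncountable $I\s\aleph_1$; by Theorem~A I may also fix a coloring $c_0$ witnessing (G). Given an $\ell_\infty$-coherent $p:[\aleph_1]^2\to\aleph_0$ with fibres $p_\alpha\colon\alpha\to\aleph_0$, so that $\sup\{|p_\alpha(\xi)-p_{\alpha'}(\xi)|\mid \xi<\alpha\}$ is finite whenever $\alpha<\alpha'$, the goal is to construct $c:[\aleph_1]^2\to\aleph_1$ witnessing $\proo_p$: for every target $\delta<\aleph_1$ and every admissible pair of families $\mathcal A$ (countable) and $\mathcal B$ (of size $\aleph_1$) of pairwise disjoint finite subsets of the appropriate coherence type over $p$, there should be $a\in\mathcal A$ and an uncountable $\mathcal B'\s\mathcal B$ with $c[a\times b]=\{\delta\}$ for all $b\in\mathcal B'$ lying above $a$.

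The construction would proceed in the usual two movements. \emph{Normalization:} shrink $\mathcal B$ to an uncountable subfamily with a common root $r$, pairwise disjoint tails of fixed size, fixed order types and relative positions, and -- crucially -- using $\ell_\infty$-coherence, fixed values $p(\beta,\gamma)$ for $\beta$ ranging over a neighbourhood of the root and $\gamma$ over the tails; since a uniform finite error per column realizes only finitely many blocks, a further shrinking forces the whole product $a\times(b\setminus r)$ into a single block $i^\ast<\aleph_0$, independently of $b\in\mathcal B'$ and $a\in\mathcal A$. \emph{Encoding:} arrange $c$ so that on a pair $\{\beta,\gamma\}$ with $p(\beta,\gamma)=i$ the value $c(\beta,\gamma)$ is read off from $f_k(\gamma)$ for $k$ determined by $i$ and by $c_0(\beta,\gamma)$, in such a way that for a fixed finite $a$ and $\gamma$ ranging over an uncountable subset of block $i^\ast$, the tuple $\langle c(\beta,\gamma)\mid\beta\in a\rangle$ sweeps out all of $(\aleph_1)^{|a|}$; this is exactly what finitely-many-simultaneously-onto Sierpi\'nski functions deliver. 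Feeding the normalized families through this mechanism produces the required $a$ and $\mathcal B'$.

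The main obstacle is the tension, in the normalization step, between \emph{monochromaticity} (collapsing $a\times b$ into one block $i^\ast$) and \emph{richness} (retaining uncountably many admissible $\gamma$'s inside $i^\ast$, so the $f_n$'s can be applied). This is precisely where $\ell_\infty$-coherence is used and why it is the sharp hypothesis: the uniform finite bound on column errors means each map $\gamma\mapsto p(\beta,\gamma)$ is constant modulo a finite set of values, so a single block survives together with an uncountable set of witnesses; for a general countable partition this collapses, consistently with (M) failing to imply $\proo_p$ in general. Once the normalization is in place, the coordination of the $f_n$'s with $c_0$ to realize arbitrary colour patterns on finite sets is routine bookkeeping, parallel to the proof of Theorem~A.
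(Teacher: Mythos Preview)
Your converse direction is fine and matches the paper's route $(5)\Rightarrow(6)\Rightarrow(7)\Rightarrow(1)$: a constant $p$ is $\ell_\infty$-coherent, and $\proo_p$ for constant $p$ yields a coloring as in (EHM), hence (M).

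The forward direction, however, has two genuine gaps.

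\textbf{First, you have misidentified the target relation.} The principle $\proo_p$ does \emph{not} ask for $c[a\times b]=\{\delta\}$. It asks that for every \emph{matrix of functions} $\langle\tau_{n,m}:\omega\to\omega_1\mid n<k,m<l\rangle$ there be $b\in\mathcal B$ with $c(a(n),b(m))=\tau_{n,m}(p(a(n),b(m)))$ for all $n,m$. Your stated goal is only the special case where every $\tau_{n,m}$ is the constant function $\delta$; the full statement requires the output of $c$ to \emph{track} the $p$-value through an arbitrary prescribed function at each matrix position. Your encoding sketch (``read off $c(\beta,\gamma)$ from $f_k(\gamma)$ with $k$ determined by $i=p(\beta,\gamma)$'') gestures at this dependence, but the verification you outline is for the wrong statement.

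\textbf{Second, the normalization step does not work as you describe.} You claim that $\ell_\infty$-coherence lets you shrink $\mathcal B$ so that $a\times(b\setminus r)$ lands in a single $p$-block $i^*$, \emph{independently of $a\in\mathcal A$}. But $\ell_\infty$-coherence only says that for fixed $\epsilon$ and $\beta\ge\epsilon$ the differences $p(\alpha,\beta)-p(\alpha,\epsilon)$ are bounded; it does \emph{not} make $\alpha\mapsto p(\alpha,\beta)$ constant, and the values $p(\alpha,\epsilon)$ for $\alpha\in\bigcup\mathcal A$ form an infinite set in general. So there is no single $i^*$, and there is no way to stabilize uniformly over the infinite family $\mathcal A$ before $a$ is selected. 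What $\ell_\infty$-coherence \emph{does} give you --- once $a$ has already been chosen --- is a finite bound $t$ with $p(\alpha,\beta)<t$ for all $\alpha\in a$ and all $\beta$ in the (shrunk) $\mathcal B'$. That bound is what makes a finite encoding of the $\tau$-matrix possible.

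For comparison, the paper's argument runs $(1)\Rightarrow(2)\Rightarrow(3)\Rightarrow(4)\Rightarrow(5)$. The engine is not (G) but the stronger Clause~(4): a coloring $c:[\omega_1]^2\to\omega_1\times\omega$ such that some $a\in\mathcal A$ has the property that for \emph{every} $\delta$ there are $\iota,b$ with $\{\alpha<\beta\mid c(\alpha,\beta)=(\delta,\iota)\}=a$ for all $\beta\in b$. Given $p$, one then sets $d(\alpha,\beta):=\eta_{\gamma_\delta}(h_{i_\delta}(\alpha),h_{j_\delta}(\beta),p(\alpha,\beta))$, where $(\delta,\iota)=c(\alpha,\beta)$, the $h_j$'s are Engelking--Kar\l owicz functions recording positions within $a$ and $b$, and $\eta_\gamma$ enumerates finite functions $k\times l\times t\to\omega_1$. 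The $a$ is produced by Clause~(4); \emph{then} $\ell_\infty$-coherence bounds $p(a(n),b(m))<t$; \emph{then} for a given $\tau$ one packages it as a single $\eta:k\times l\times t\to\omega_1$, reads off the corresponding $\delta$, and invokes (4) to find $b$. The order of choices is essential, and it is exactly the order your normalization step inverts.
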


This leaves open the question whether 
Theorem~A' can be extended to cover all countable partitions. The second main result of this paper shows that this is not
the case. In fact, (L*) does not even imply $\aleph_1\to_p[\aleph_1]^2_{\aleph_0}$ for all countable  $p$, whereas without the $p$ this relation holds in $\zfc$
by Todor{\v{c}}evi{\'c}'s celebrated theorem \cite{TodActa}.

\begin{thmb}
It is consistent that (L*) holds and there is a partition
$p:[\aleph_1]^2\rightarrow\aleph_0$ such that the positive Ramsey relation $\aleph_1\rightarrow_p(\aleph_1)^2_{\aleph_0}$ holds.
\end{thmb}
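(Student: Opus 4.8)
The target is a model of $\ch$, in which (L) automatically holds, containing a countable partition $p$ over which every coloring $c\colon[\aleph_1]^2\to\aleph_0$ is $p$-special, i.e.\ fails to be strong over $p$. Since under $\ch$ there are $2^{\aleph_1}>\aleph_1$ colorings, no length-$\omega_1$ construction can enumerate them one by one, so the plan is to use a $\diamondsuit$-sequence in the ground model to guess \emph{initial segments} of names for colorings, and to react to each correct guess by adding a single new point to a growing \emph{certificate} (an uncountable set, together with bounded bookkeeping data, witnessing that the guessed coloring is $p$-special). Concretely, over a ground model of $\diamondsuit$ (which yields $\ch$ and a Souslin tree $T$), perform a finite-support ccc iteration $\langle\mathbb P_\alpha,\dot{\mathbb Q}_\alpha\mid\alpha<\omega_1\rangle$ of length $\omega_1$ that builds a generic partition $p\colon[\aleph_1]^2\to\aleph_0$ level by level, where the $\alpha$-th iterand $\dot{\mathbb Q}_\alpha$ is a ccc (indeed $\sigma$-centered) poset of finite approximations that commits a little more of $p$ and advances the certificate for the coloring named by the stage-$\alpha$ $\diamondsuit$-guess, the freedom in the not-yet-decided levels of $p$ being used to keep that certificate valid.

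The verification has three parts. First, each $\dot{\mathbb Q}_\alpha$ is ccc, hence so is $\mathbb P_{\omega_1}$ (a finite-support iteration of ccc posets is ccc); as $|\mathbb P_{\omega_1}|=\aleph_1$ and $\ch$ holds in $V$, a nice-name count gives $2^{\aleph_0}=\aleph_1$ in $V[\mathbb P_{\omega_1}]$, so $\ch$, and therefore (L), holds there. Second, for coverage: given any $\mathbb P_{\omega_1}$-name $\dot c$ for a coloring $[\aleph_1]^2\to\aleph_0$, a closing-off argument produces a club $C\s\omega_1$ on which $\dot c{\restriction}[\alpha]^2$ reduces to a $\mathbb P_\alpha$-name, and $\diamondsuit$ makes the stage-$\alpha$ guess equal to $\dot c{\restriction}[\alpha]^2$ for stationarily many $\alpha\in C$; along those stages the certificate for $\dot c$ grows, and — this is the crux of the design — it is set up \emph{continuously}, so that the certificate accumulated over the whole stationary set is an uncountable witness that $\dot c$ is $p$-special. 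Since $\dot c$ was arbitrary, $\aleph_1\rightarrow_p(\aleph_1)^2_{\aleph_0}$ holds in $V[\mathbb P_{\omega_1}]$, with $p$ the required countable partition. Third, arranging in addition that each $\dot{\mathbb Q}_\alpha$ preserves the Souslinity of $T$ (and, optionally, adjoins a Cohen real, so that the added Cohen reals visibly form a Luzin set) yields the refinement announced in the abstract.

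The heart of the matter — and the expected main obstacle — is the simultaneous design of the iterand $\dot{\mathbb Q}_\alpha$ and of the notion of certificate: one must decide what a $p$-specialness certificate for a coloring is, how each finite approximation commits the partition $p$, and what promises are attached to conditions, so that (a) every $\dot{\mathbb Q}_\alpha$ is ccc — this rests on a $\Delta$-system argument, the generic, largely unconstrained levels of $p$ providing the room to amalgamate conditions — and (b) the continuity property of the second step holds, so that the $\diamondsuit$-bookkeeping actually produces a full certificate in the limit rather than merely certificates for initial segments. Matching these requirements is what pins down the precise $\pr_0$-parameters for which the argument goes through; in particular the resulting $p$ is necessarily not $\ell_\infty$-coherent, consistently with Theorem~A$'$, since (L)$\Rightarrow$(M) forces a strong coloring over every $\ell_\infty$-coherent partition.
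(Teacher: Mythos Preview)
Your strategy is genuinely different from the paper's, and the central step is not carried out. The paper does \emph{not} build $p$ generically, does not use $\diamondsuit$-guessing, and does not aim for $\ch$ in the extension. Instead, it fixes in the ground model (via $\mathfrak d=\aleph_1$) a partition $p$ with injective, almost-disjoint, Cohen fibers and a certain domination property (Lemma~\ref{jsdbtgy}); it then runs a finite-support iteration of length $\omega_2$ of the posets $\mathbb Q(p,\dot c_\xi)$ of Definition~\ref{def52}, each of which explicitly forces the decomposition $\omega_1=\biguplus_i X_i$ witnessing that $\dot c_\xi$ is $p$-special. Bookkeeping over $\omega_2$ catches every coloring directly, with no guessing. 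The hard work is elsewhere: the iteration destroys $\ch$, so one must prove that a ground-model Luzin set survives. This is Lemma~\ref{LuzinPres}, which rests on the technical Lemma~\ref{Oawewlkn11}, and it is precisely the combinatorial properties of $p$ isolated in Lemma~\ref{jsdbtgy} that make that preservation argument go through.

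By contrast, your proposal front-loads the difficulty into ``the simultaneous design of the iterand and of the notion of certificate,'' and then does not do it. This is not a minor omission: there is a concrete obstacle. At stage $\alpha$ your $\diamondsuit$-guess reveals only $c\restriction[\alpha]^2$, so if you wish to add a new ordinal $\gamma$ to the putative $(p,c)$-homogeneous set, either $\gamma\ge\alpha$ and you do not know the relevant $c$-values $c(\delta,\gamma)$ at all, or $\gamma<\alpha$ and the $p$-values $p(\delta,\gamma)$ you would need to tailor may already be committed by earlier stages. You assert that ``the freedom in the not-yet-decided levels of $p$'' resolves this, but you give no mechanism, and the required ``continuity'' --- that the finite fragments of certificate laid down at the stationarily many stages where a fixed $\dot c$ is guessed cohere into a single uncountable $(p,c)$-homogeneous set governed by one witness function $w$ --- is exactly the content of the theorem. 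Without a specific definition of $\dot{\mathbb Q}_\alpha$ and a proof of (a) ccc and (b) this coherence, the proposal is a plan, not a proof.
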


Theorem~B is proved in Section~\ref{Luzin}.
The model witnessing the theorem is obtained by forcing over a ground model of $\ch$
to which the partition $p$, whose existence is equivalent to the statement  $\mathfrak d=\aleph_1$, belongs.
In the forcing extension there  exist a strongly Luzin set and  a coherent Souslin tree,
but every coloring $c:[\aleph_1]^2\rightarrow\aleph_0$ 
is \emph{$p$-special}:
 there is a decomposition $\langle X_i\mid i<\omega\rangle$ of $\aleph_1$ into $(p,c)$-homogeneous sets, that is, 
 for each $i<\omega$,  $p(\alpha,\beta)$ determines $c(\alpha,\beta)$ for  all $(\alpha,\beta)\in [X_i]^2$.

\section{Strong colorings and partitions}\label{sectionP}

Surveys of the rich theory
of strong colorings that was developed since Sierpi{\'n}ski's time to
the present may be found in the introductions to
\cite{paper18,strongcoloringpaper}. For the scope of this paper, 
we just need the following.
\begin{defn}[\cite{paper50}]\label{prelations}
Let $p:[\omega_1]^2\rightarrow\omega$ be a partition. 
For cardinals $\chi\le\omega$ and $\theta\le\omega_1$, a coloring $c:[\omega_1]^2\rightarrow\theta$ is said to witness
\begin{itemize}
\item $\pr_1(\omega_1,\omega_1,\theta,\chi)_p$ iff for every uncountable pairwise
  disjoint subfamily $\mathcal A\s[\omega_1]^{n}$, every $n<\chi$,  
and every function $\tau:\omega\to\theta$ there are $a,b\in\mathcal A$ with $a<b$ such that
$$c(\alpha,\beta)=\tau(p(\alpha,\beta))\text{ for all }\alpha\in a\text{ and }\beta\in b;$$
\item $\pro{\omega_1}{\omega}{1}{\theta}{\chi}_p$
iff for every pairwise disjoint subfamilies $\mathcal A,\mathcal B$ of $[\omega_1]^n$ 
with $|\mathcal A|=\omega$, $|\mathcal B|=\omega_1$ and $n<\chi$
there is $a\in \mathcal A$ such that
for every function $\tau:\omega\to\theta$,
there is $b\in\mathcal B$ with $a<b$ such that
$$c(\alpha,\beta)=\tau(p(\alpha,\beta))\text{ for all }\alpha\in a\text{ and }\beta\in b;$$

\item $\prz{\omega_1}{\omega}{1}{\theta}{\chi}_p$
iff for every pairwise disjoint subfamilies $\mathcal A,\mathcal B$ of $[\omega_1]^n$ 
with $|\mathcal A|=\omega$, $|\mathcal B|=\omega_1$ and $n<\chi$,
there is $a\in \mathcal A$ such that
for every matrix $(\tau_{i,j})_{ i,j<n}$ of
functions from $\omega$ to $\theta$,
there is $b\in\mathcal B$ with $a<b$ such that
$$c(a(i),b(j))=\tau_{i,j}(p(a(i),b(j)))\text{ for all }i,j<n.$$
\end{itemize}
\end{defn}
\begin{remark} Here $a(i)$ stands for the $i^{th}$ element of $a$.
\end{remark}

\begin{defn} For a partition $p:[\omega_1]^2\rightarrow\omega$:
\begin{itemize}
\item $p$ has \emph{injective fibers} iff $p(\alpha,\beta)\neq p(\alpha',\beta)$ for all $\alpha<\alpha'<\beta$;
\item $p$ has \emph{finite-to-one fibers} iff $\{ \alpha<\beta\mid p(\alpha,\beta)=j\}$ is finite for all $\beta<\omega_1$ and $j<\omega$;
\item $p$ has \emph{almost-disjoint fibers}  iff  $\{p(\alpha,\beta)\mid \alpha<\beta\}\cap\{ p(\alpha,\beta')\mid \alpha<\beta\}$ is finite
for all $\beta<\beta'<\omega_1$;
\item $p$ has \emph{Cohen fibers} 
iff for every injection $g:a\rightarrow\omega$ with $a\in[\omega_1]^{<\omega}$,
there are cofinally many $\beta<\omega_1$ such that $g(\alpha)=p(\alpha,\beta)$ for all $\alpha\in a$;
\item $p$ is \emph{coherent} iff  $\{\alpha<\beta\mid p(\alpha,\beta)\neq p(\alpha,\beta')\}$ 
is finite for all $\beta<\beta'<\omega_1$;
\item $p$ is \emph{$\ell_\infty$-coherent}
iff $\{p(\alpha,\beta)-p(\alpha,\beta')\mid \alpha<\beta\}$ is finite for all $\beta<\beta'<\omega_1$.
\end{itemize}
\end{defn}
\begin{remark}
An example of an $\ell_\infty$-coherent partition which is not coherent 
is the map $\rho_2:[\omega_1]^2\rightarrow\omega$ from the theory of \emph{walks on ordinals} \cite[p.~269]{TodActa}.
\end{remark}

For many cardinal characteristics  $\mathfrak x$ of the continuum, the
assertion ``$\mathfrak x=\aleph_1$'' may be reformulated as a
statement about the existence of a partition
$p:[\omega_1]^2\rightarrow\omega$ with certain properties.  In
Section~\ref{Luzin} we shall need following  reformulation of
``$\mathfrak d=\aleph_1$''.

\begin{lemma} \label{jsdbtgy}
$\mathfrak d=\aleph_1$ iff there exists a partition
  $p:[\omega_1]^2\rightarrow\omega$ with injective,
  almost-disjoint and Cohen fibers which satisfies the following:

For every function $h:\epsilon\rightarrow\omega$ with $\epsilon<\omega_1$
there exists $\gamma<\omega_1$
such that for every $b\in[\omega_1\setminus\gamma]^{<\aleph_0}$
  there exists $\Delta\in[\epsilon]^{<\aleph_0}$ such that:
  \begin{itemize}
  \item for all $\alpha\in\epsilon\setminus\Delta$  and $\beta\in b$, $h(\alpha)<p(\alpha,\beta)$;
  \item $p\restriction ((\epsilon\setminus\Delta)\times b)$ is injective.
  \end{itemize}
\end{lemma}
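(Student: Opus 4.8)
The plan is to prove the two implications separately, the direction ``$p$ exists $\Rightarrow\mathfrak d=\aleph_1$'' being routine and the construction of $p$ from $\mathfrak d=\aleph_1$ carrying the weight. For the routine direction, I would specialize the displayed property to $\epsilon=\omega$: given $h\in\omega^\omega$, fix the promised $\gamma$ and enlarge it so that $\gamma\ge\omega$; feeding the singleton $b=\{\beta\}$ for an arbitrary $\beta\in[\gamma,\omega_1)$ yields a finite $\Delta\s\omega$ with $h(\alpha)<p(\alpha,\beta)$ for every $\alpha\in\omega\setminus\Delta$, i.e.\ the column $\alpha\mapsto p(\alpha,\beta)$ restricted to $\omega$ eventually dominates $h$. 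Thus $\{(\alpha\mapsto p(\alpha,\beta))\restriction\omega\mid\omega\le\beta<\omega_1\}$ is a dominating family of size $\le\aleph_1$, and since $\mathfrak d\ge\aleph_1$ holds in $\zfc$, we conclude $\mathfrak d=\aleph_1$ (none of the fiber properties is needed here).

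For the converse, the plan is to construct the columns $f_\beta\colon\beta\to\omega$, set $p(\alpha,\beta):=f_\beta(\alpha)$, and work by recursion on $\beta<\omega_1$. I would fix in advance a dominating family $\langle g_\xi\mid\xi<\omega_1\rangle\s\omega^\omega$, an almost disjoint family $\langle A_\beta\mid\beta<\omega_1\rangle$ of infinite subsets of $\omega$, and bijections $e_\epsilon\colon\omega\to\epsilon$ for each infinite $\epsilon<\omega_1$; the recursion will maintain that (i) $f_\beta$ is injective and $\mathrm{ran}(f_\beta)\setminus A_\beta$ is finite, and (ii) for every $\beta'<\beta$ the set $\{\alpha<\beta'\mid f_\beta(\alpha)\le f_{\beta'}(\alpha)\}$ is finite. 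A bookkeeping device hands out, at stage $\beta$, a finite injection $g_\beta$ with $\dom(g_\beta)\s\beta$ and a pair $(\epsilon_\beta,\xi_\beta)$ with $\epsilon_\beta\le\beta$, arranged so that every finite injection from a finite subset of $\omega_1$ into $\omega$ is handed out at cofinally many stages and every pair $(\epsilon,\xi)\in\omega_1\times\omega_1$ is handed out at some stage $\ge\epsilon$. At stage $\beta$, having fixed an enumeration $\langle\delta_n\mid n<\omega\rangle$ of $\beta$, I would put $f_\beta(\delta_n):=g_\beta(\delta_n)$ when $\delta_n\in\dom(g_\beta)$, and otherwise choose $f_\beta(\delta_n)\in A_\beta$ large enough to exceed $f_{\delta_k}(\delta_n)$ for every $k<n$ with $\delta_n<\delta_k$, to exceed $g_{\xi_\beta}(e_{\epsilon_\beta}^{-1}(\delta_n))$ whenever $\epsilon_\beta$ is infinite and $\delta_n<\epsilon_\beta$, and to avoid $\mathrm{ran}(g_\beta)$ together with the finitely many values already chosen; infinitude of $A_\beta$ makes all this possible. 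Clause (i) then gives injective fibers and, since $A_\beta\cap A_{\beta'}$ is finite, almost disjoint fibers; and the bookkeeping gives Cohen fibers, because $g_\beta\s f_\beta$ at cofinally many $\beta$ (those cofinally many $\beta$ automatically satisfy $\beta>\max\dom(g_\beta)$).

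To check the displayed property, fix $\epsilon<\omega_1$ and $h\colon\epsilon\to\omega$. If $\epsilon$ is finite the first clause is vacuous and $\Delta:=\epsilon$ settles the second, so assume $\epsilon$ is infinite. I would pick $\xi$ with $h\circ e_\epsilon\le^* g_\xi$ and let $\beta_0$ be a stage (necessarily $\ge\epsilon$) at which $(\epsilon,\xi)$ is handed out; the recipe for $f_{\beta_0}$ then forces $f_{\beta_0}(\alpha)>g_\xi(e_\epsilon^{-1}(\alpha))\ge h(\alpha)$ for all but finitely many $\alpha<\epsilon$, and clause (ii) propagates this to all larger stages, so that $D_\beta:=\{\alpha<\epsilon\mid f_\beta(\alpha)\le h(\alpha)\}$ is finite for every $\beta\ge\beta_0$. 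I claim $\gamma:=\beta_0$ works. Given a finite $b\s[\gamma,\omega_1)$, put
$$\Delta:=\Bigl(\bigcup_{\beta\in b}D_\beta\Bigr)\cup\Bigl(\bigcup_{\beta\ne\beta'\in b}\{\alpha<\epsilon\mid f_\beta(\alpha)\in\mathrm{ran}(f_{\beta'})\}\Bigr),$$
which is finite since each $D_\beta$ is finite and, for distinct $\beta,\beta'\in b$, the set $\{\alpha<\epsilon\mid f_\beta(\alpha)\in\mathrm{ran}(f_{\beta'})\}$ is finite ($\mathrm{ran}(f_\beta)\cap\mathrm{ran}(f_{\beta'})$ being finite and $f_\beta$ injective). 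For $\alpha\in\epsilon\setminus\Delta$ and $\beta\in b$ we have $\alpha\notin D_\beta$, hence $h(\alpha)<p(\alpha,\beta)$, which is the first bullet. For the second, if $(\alpha,\beta),(\alpha',\beta')\in(\epsilon\setminus\Delta)\times b$ and $p(\alpha,\beta)=p(\alpha',\beta')$, then $\beta=\beta'$ forces $\alpha=\alpha'$ by injectivity of the fiber, while $\beta\ne\beta'$ is impossible, since then $f_\beta(\alpha)=f_{\beta'}(\alpha')\in\mathrm{ran}(f_{\beta'})$ would place $\alpha$ in $\Delta$.

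I expect the main obstacle to be orchestrating the recursion so that the demand of Cohen fibers, which pins down finitely many \emph{small} values of each $f_\beta$, does not collide with clause (ii) or with the growth of $f_\beta\restriction\epsilon$ needed to absorb an arbitrary $h$. What resolves this is that (ii) and the domination requirement are both ``modulo finite'' statements, so the finitely many values dictated by $g_\beta$ are harmless; a secondary point, handled precisely by clause (ii), is that the displayed property demands that a whole tail $[\gamma,\omega_1)$ of columns dominate $h$ rather than merely cofinally many, which is why the columns are forced to be $\le^*$-increasing on each initial segment. A mild technical burden that I would not dwell on is the verification that a bookkeeping with the two stated coverage properties exists, which is routine.
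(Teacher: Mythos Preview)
Your argument is correct, and it takes a genuinely different route from the paper's. The paper does not build the columns $f_\beta$ by a direct recursion; instead it fixes a \emph{coherent} partition $q:[\omega_1]^2\to\omega$ with injective fibers and a $<^*$-increasing dominating sequence $\langle d_\beta\mid\beta<\omega_1\rangle$, and then sets (outside the finite Cohen patch $g_\beta$) $p(\alpha,\beta):=\pi(d_\beta(q(\alpha,\beta)),q(\alpha,\beta))$ for a pairing $\pi$. Coherence of $q$ replaces your invariant~(ii): because the map $\alpha\mapsto q(\alpha,\beta)$ agrees with $\alpha\mapsto q(\alpha,\epsilon)$ modulo finite for every $\beta\ge\epsilon$, a single real $r\in{}^\omega\omega$ coding $h$ via $q(\cdot,\epsilon)$ suffices, and any $\gamma$ beyond which $r<^*d_\beta$ works. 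Almost-disjointness of fibers then falls out of the $<^*$-monotonicity of $\vec d$ together with the injectivity of $\pi$, with no separate almost-disjoint family needed.

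Your construction is more elementary in that it avoids invoking the existence of a coherent $q$ and makes the crucial eventual-domination invariant~(ii) explicit; the paper's version is shorter once the scaffolding (coherent $q$, pairing $\pi$, auxiliary $\psi$) is in place, and arguably isolates more clearly \emph{why} coherence-type phenomena are behind the displayed property. Two minor points of presentation in your write-up: for finite $\epsilon$ the first bullet is not vacuous by itself---it becomes vacuous because you take $\Delta=\epsilon$---and the enumeration $\langle\delta_n\mid n<\omega\rangle$ of $\beta$ tacitly assumes $\beta$ infinite; both are harmless but worth a word.
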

\begin{proof} For the backwards implication, derive an $\omega_1$-sized cofinal family
$\{ r_\beta\mid \omega\le\beta<\omega_1\}$ in
  $({}^\omega\omega,{<^*})$ by letting $r_\beta(n):=p(n,\beta)$.  

We turn now to  the forward implication.
Fix a coherent $q:[\omega_1]^2\rightarrow\omega$ having injective fibers (see, e.g., \cite[Theorem~5.9]{MR756630}). 
Fix an enumeration $\langle g_\beta\mid \beta<\omega_1\rangle$
of all injections $g$ with $\dom(g)\in [\omega_1]^{<\aleph_0}$ and $\im(g)\s \omega$ in which
each such injection occurs cofinally often.
For each $\beta<\omega_1$, let $m_\beta:=\sup(\im(g_\beta))+1$.
Fix a bijection $\pi:\omega\times\omega\leftrightarrow\omega$.
Derive a function $\psi:\omega\rightarrow\omega$ via 
$$\psi(m):=\max\{ i<\omega\mid \exists j<\omega\,(\pi(i,j)\le m)\}.$$

Using  $\mathfrak d=\aleph_1$, it is easy to construct recursively a sequence
$\vec d=\langle d_\beta\mid \beta<\omega_1\rangle$ such that
$\vec d$ is increasing and cofinal in $({}^\omega\omega,{<^*})$,
and, for every $\beta<\omega_1$, $\min(\im(d_\beta))>\psi(m_\beta)$.
Finally, define a partition $p:[\omega_1]^2\rightarrow\omega$ via:
$$p(\alpha,\beta):=\begin{cases}
g_\beta(\alpha)&\text{if }\alpha\in\dom(g_\beta);\\
\pi(d_\beta(q(\alpha,\beta)),q(\alpha,\beta))&\text{otherwise}.\end{cases}$$

\begin{claim} Let $\alpha<\beta<\omega_1$.
Then $\alpha\in\dom(g_\beta)$ iff $p(\alpha,\beta)\in\im(g_\beta)$.
\end{claim}
\begin{proof} The forward implication is clear, so suppose that $\alpha\notin\dom(g_\beta)$,
and set $j:=q(\alpha,\beta)$.
By the choice of $d_\beta$,  $i:=d_\beta(j)$ is greater than $\psi(m_\beta)$,
and hence $\pi(i,j)>m_\beta>\sup(\im(g_\beta))$. Altogether, $p(\alpha,\beta)=\pi(i,j)>\sup(\im(g_\beta))$.
\end{proof}
As $\pi$ is injective,  $q$ has injective fibers
and each $g_\beta$ is injective, 
it  follows that $p$ has injective fibers.
It is also clear that $p$ has Cohen fibers.

\begin{claim} $p$ has almost-disjoint fibers.
\end{claim}
\begin{proof} Fix an arbitrary pair $(\beta,\beta')\in[\omega_1]^2$ and consider the set
$$A:=\{p(\alpha,\beta)\mid \alpha<\beta\}\cap\{p(\alpha,\beta')\mid \alpha<\beta\}.$$

Evidently, $|A|\le m_\beta+m_{\beta'}+|\{ n<\omega\mid d_\beta(n)=d_{\beta'}(n)\}|<\omega$.
\end{proof}

To see that $p$ is as sought, fix arbitrary ordinal $\epsilon<\omega_1$ and 
function $h:\epsilon\rightarrow\omega$.
As $q$ has coherent fibers, for every $\beta<\omega_1$ above $\epsilon$,
the following set is finite
$$A^0_\beta:=\{\alpha<\epsilon\mid q(\alpha,\epsilon)\neq q(\alpha,\beta)\}.$$

Define a real $r:\omega\rightarrow\omega$ via 
$$r(n):=\begin{cases}
0&\text{if }\forall\alpha<\epsilon\,(q(\alpha,\epsilon)\neq n));\\
\psi(h(\alpha))&\text{if }q(\alpha,\epsilon)=n.
\end{cases}$$

Find a large enough ordinal $\gamma<\omega_1$ such that $\epsilon<\gamma$ and  $r<^* d_\beta$
for every $\beta\in[\gamma,\omega_1)$.
Now, let $b\in[\omega_1\setminus\gamma]^{<\aleph_0}$ be arbitrary.
As $q$ has injective fibers, for every $\beta\in[\gamma,\omega_1)$,
the following set is finite
$$A^1_\beta:=\{\alpha<\epsilon\mid r(q(\alpha,\beta))\ge d_\beta(q(\alpha,\beta))\}.$$

As $\vec d$ is $<^*$-increasing,
we may find some $m^*<\omega$ such that, for all $n\in[m^*,\omega)$ and $(\beta,\beta')\in[b]^2$, $d_\beta(n)<d_{\beta'}(n)$.
Now, as $q$ has injective fibers, it follows that 
the following set is finite:
$$\Delta:=\bigcup_{\beta\in b}(A^0_\beta\cup A^1_\beta\cup\dom(g_\beta)\cup\{\alpha<\epsilon\mid q(\alpha,\beta)<m^*\}).$$

\begin{claim} 
  \begin{enumerate}
  \item for all $\alpha\in\epsilon\setminus\Delta$  and $\beta\in b$, $h(\alpha)<p(\alpha,\beta)$;
  \item $p\restriction ((\epsilon\setminus\Delta)\times b)$ is injective.
  \end{enumerate}
\end{claim}
\begin{proof} 
(1) Let $\alpha\in\epsilon\setminus\Delta$ and $\beta\in b$.
Set $n:=q(\alpha,\epsilon)$. As $\alpha\in\epsilon\setminus A^0_\beta$,
$q(\alpha,\beta)=n$, 
so that $\psi(h(\alpha))=r(n)=r(q(\alpha,\beta))$.
As $\alpha\in\epsilon\setminus A^1_\beta$,
$r(q(\alpha,\beta))<d_\beta(q(\alpha,\beta))$.
Altogether, $\psi(h(\alpha))<d_\beta(q(\alpha,\beta))$
and hence $\pi(d_\beta(q(\alpha,\beta)),j)> h(\alpha)$ for all $j<\omega$.
In particular, since $\alpha\notin\dom(g_\beta)$, $p(\alpha,\beta)=\pi(d_\beta(q(\alpha,\beta)),q(\alpha,\beta))>h(\alpha)$.

(2) Fix $(\alpha,\beta),(\alpha',\beta')\in (\epsilon\setminus\Delta)\times b$ with $p(\alpha,\beta)=p(\alpha',\beta')$.
If $\beta=\beta'$, then since $p$ has injective injective fibers, $\alpha=\alpha'$ and we are done.
So, suppose that $\beta\neq\beta'$, say, $\beta<\beta'$. 
Denote $(k,n):=(d_\beta(q(\alpha,\beta)),q(\alpha,\beta))$.
As $p(\alpha,\beta)=p(\alpha',\beta')$, $\alpha\notin\dom(g_\beta)$ and $\alpha'\notin\dom(g_{\beta'})$,
it follows that $(d_{\beta'}(q(\alpha',\beta')),q(\alpha',\beta'))=(k,n)$.
In particular, $d_\beta(n)=d_{\beta'}(n)$.
As $\alpha\in\epsilon\setminus\Delta$, we infer that $n \ge m^*$, so  $d_\beta(n)<d_{\beta'}(n)$. This is a contradiction.
\end{proof}
This completes the proof.
\end{proof}

\section{Strong colorings  from a nonmeager set}\label{nonmeager}

In the next Theorem, which proves Theorems A and A':
Clause~(1) is proposition~(M).
Clause~(2) is a syntactic weakening of proposition~(S), but
addressing a concern raised by Bagemihl and Sprinkle \cite{MR63420},
it was shown by Miller \cite{Miller} to be equivalent to it.
Clause~(3) is a high-dimensional version of Clause~(2).
Clause~(4) asserts the existence of a coloring $c:[\omega_1]^2\rightarrow\omega_1\times\omega$ 
for which the map $(\alpha,\beta)\mapsto\delta$ iff $\exists\iota[c(\alpha,\beta)=(\delta,\iota)]$ 
witnesses $\proon$ of Definition~\ref{prelations} and $c$ itself has finite-to-one fibers.
Clause~(5) is proposition~(T) over $\ell_\infty$-coherent partitions.
Clause~(6) is slightly weaker than proposition~(EHM).
The implication $(7)\implies(1)$ is due to Miller \cite{MR613787}
and the implication $(1)\implies(2)$ is due to Guzm\'an \cite{MR3694336}.

\begin{thm}\label{thma} All of the following are equivalent:
\begin{enumerate}
\item $\non(\mathcal  M)=\aleph_1$;
\item There exists a sequence $\vec f=\langle f_m\mid m<\omega\rangle$ of functions
from $\omega_1$ to $\omega_1$ satisfying
that for every cofinal subset $B\s\omega_1$ there exists $m<\omega$ such that
$f_m[B]=\omega_1$;
\item There exists a sequence $\vec g=\langle g_n\mid n<\omega\rangle$ of functions
from $\omega_1$ to $\omega_1$ satisfying
that for every uncountable pairwise disjoint subfamily $\mathcal B\s[\omega_1]^{<\aleph_0}$
there are infinitely many $n<\omega$ such that for every
$\gamma<\omega_1$, for some $b\in\mathcal B$, $g_n[b]=\{\gamma\}$;
\item
 There exists a coloring $c:[\omega_1]^2\to\omega_1\times\omega$ with finite-to-one fibers,
such that for every 
\begin{itemize}
 \item $k<\omega$ and an infinite pairwise disjoint subfamily  $\mathcal A\s[\omega_1]^k$
 \item $l<\omega$ and an uncountable pairwise disjoint subfamily $\mathcal B\s[\omega_1]^l$
 \end{itemize}
 there exists
 $a\in\mathcal A$ such that for every $\delta<\omega_1$
 there are $\iota<\omega$ and $b\in\mathcal B$ such that
 $$\{\alpha<\beta\mid c(\alpha,\beta)=(\delta,\iota)\}=a\text{ for every }\beta\in b;$$
\item For every $\ell_\infty$-coherent partition $p:[\omega_1]^2\rightarrow\omega$,
there exists a corresponding coloring $d:[\omega_1]^2\to\omega_1$
satisfying that for every
\begin{itemize}
 \item $k<\omega$ and an infinite pairwise disjoint subfamily  $\mathcal A\s[\omega_1]^k$
 \item $l<\omega$ and an uncountable pairwise disjoint subfamily $\mathcal B\s[\omega_1]^l$
 \end{itemize}
 there exists $a\in\mathcal A$ such that
 for every matrix $\langle \tau_{n,m}\mid n<k,m<l\rangle$ of
functions from $\omega$ to $\omega_1$
there exists $b\in\mathcal B$ such that 
$$d(a(n),b(m))=\tau_{n,m}(p(a(n),b(m)))\text{ for all }n<k\text{ and }m<l;$$
\item
  There exists a coloring $e:[\omega_1]^2\rightarrow \omega$ such that for every infinite
 $A\s\omega_1$ and uncountable $B\s\omega_1$, there is $\alpha\in A$ such that $\{e(\alpha,\beta)\mid \beta\in B\setminus(\alpha+1)\}=\omega$;
\item
  There exists a subset $X\s{}^\omega\omega$ of size $\aleph_1$ with the property that
for every real $y:\omega\rightarrow\omega$,
for some $x\in X$, $x\cap y$ is infinite.
\end{enumerate}
\end{thm}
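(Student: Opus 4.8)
The strategy is to establish a cycle of implications. The two hardest links, $(7)\implies(1)$ and $(1)\implies(2)$, are quoted from Miller and Guzm\'an respectively, so the task reduces to threading the remaining clauses into a cycle around them. A natural route is $(1)\implies(2)$ (Guzm\'an), $(2)\implies(3)$, $(3)\implies(4)$, $(4)\implies(5)$, $(5)\implies(6)$, $(6)\implies(7)$, $(7)\implies(1)$ (Miller). The reformulation $(6)\iff(7)$ is the classical translation of a negative Ramsey statement into the language of reals (identify $\omega_1$ with a set $X$ of reals, $\beta\mapsto x_\beta$, and read $e(\alpha,\beta)=n$ as ``$x_\beta(n)=$ something coded by $\alpha$''); one direction essentially just unpacks definitions, and the other produces the coloring from the set $X$ by a pairing-function bookkeeping. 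I would handle $(6)\iff(7)$ first as a warm-up, since it isolates the ``real-combinatorial'' content of the whole theorem.

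The main substance is the step from low-dimensional to high-dimensional partition relations, i.e.\ $(2)\implies(3)\implies(4)$ and then $(4)\implies(5)$; this is exactly the point where the present paper improves on Galvin's $\ch$-theorem. The plan is: from a sequence $\langle f_m\mid m<\omega\rangle$ witnessing (2), build a single coloring on pairs that, after a $\Delta$-system reduction on the two families $\mathcal A,\mathcal B$ and a thinning so that the relevant root-structure and first-coordinate data stabilize, realizes prescribed patterns. The reduction to $\ell_\infty$-coherent $p$ in clause~(5) is where we invoke \cite[Lemma~9]{strongcoloringpaper}: once we have a coloring witnessing the plain (non-$p$) version whose space of witnesses is non-null (or, more directly, once the high-dimensional $\pr_0$ relation holds), $\ell_\infty$-coherence of $p$ guarantees that finitely many ``corrections'' per column suffice to absorb $p$ into the target, so the matrix $\langle\tau_{n,m}\rangle$ of functions $\omega\to\omega_1$ can be matched via $d(a(n),b(m))=\tau_{n,m}(p(a(n),b(m)))$. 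The finite-to-one-fibers clause in (4) is arranged by tagging each value with an extra $\omega$-coordinate recording how many $\alpha<\beta$ have already received that value.

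The step $(4)\implies(5)$ is, I expect, the \textbf{main obstacle}. Clause~(4) gives a single universal coloring $c$ valued in $\omega_1\times\omega$; clause~(5) must produce, for each $\ell_\infty$-coherent $p$, a coloring $d$ that defeats $p$ in the strong matrix sense. The difficulty is that $p$ can genuinely perturb any fixed coloring (this is the content of the non-triviality discussion in Section~1), so $d$ must be manufactured \emph{from} $c$ \emph{and} $p$ together. The idea is to exploit $\ell_\infty$-coherence: for fixed $\beta<\beta'$ the difference $p(\cdot,\beta)-p(\cdot,\beta')$ is bounded, hence for a fixed target column $\beta$ one can, after discarding finitely many rows, treat $p(\cdot,\beta)$ as essentially determined up to a finite ambiguity; feeding the finite-to-one $\omega$-coordinate of $c$ as an index into a table of shifts then lets $d$ simulate arbitrary $\tau_{n,m}\circ p$. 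Carrying this out requires care with the order of quantifiers (the $a\in\mathcal A$ must be chosen \emph{before} the matrix $\langle\tau_{n,m}\rangle$), so one must verify that the finitely many bad rows can be absorbed into the $\Delta$-system root uniformly over all $\beta\in b$ — this uniformity, rather than any single estimate, is the crux.

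The remaining links are routine: $(2)\implies(3)$ is a $1$-dimensional-to-finite-dimensional bootstrap by the same $\Delta$-system argument restricted to $\mathcal B\s[\omega_1]^{<\aleph_0}$ with $\mathcal A$ a single point; $(3)\implies(4)$ promotes the $\mathcal B$-side bound to a two-sided one; $(5)\implies(6)$ specializes $p$ to a trivial (say, constant) partition and collapses the matrix to a single function, recovering the $\{e(\alpha,\beta)\mid\beta\in B\}=\omega$ statement; and $(6)\implies(7)$ is the easy half of the reformulation noted above. I would present the cycle in the order $(1)\to(2)\to(3)\to(4)\to(5)\to(6)\to(7)\to(1)$, citing Guzm\'an for the first arrow and Miller for the last, and devote the bulk of the write-up to $(2)\to(3)\to(4)$ and $(4)\to(5)$.
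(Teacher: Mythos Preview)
Your overall cycle $(1)\to(2)\to\cdots\to(7)\to(1)$, with Guzm\'an and Miller supplying the endpoints, is exactly the route the paper takes, and your identifications of $(5)\implies(6)$ (specialize $p$ to a constant) and $(6)\implies(7)$ (encode $e(n,\beta)$ as $x_\beta(n)$) are correct and match the paper. But several of the intermediate steps are not as routine as you suggest, and in two places your description misses the actual mechanism.

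For $(2)\implies(3)$, calling it ``a $\Delta$-system argument with $\mathcal A$ a single point'' does not address the real difficulty: given a block $b\in\mathcal B$, you must produce $n$ with $g_n$ \emph{constant} on $b$, and a $\Delta$-system only stabilizes a root. The paper's device is to fix, via Engelking--Kar\l owicz, a countable family $\langle h_j\mid j<\omega\rangle$ of functions $\omega_1\to\omega$ realizing every finite partial function, together with surjections $e_\beta:\omega\to\beta+1$, and to set $g_{\pi^{-1}(m,j)}(\beta)=f_m(e_\beta(h_j(\beta)))$. The point is that some $h_{j'}$ sends each $\beta\in b$ to the index $i$ with $e_\beta(i)=\min(b)$, collapsing the whole block to $\min(b)$ before $f_m$ is applied. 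Without this collapse-to-a-representative trick, there is no evident way to make $g_n$ constant on arbitrary finite blocks.

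For $(3)\implies(4)$, ``promotes the $\mathcal B$-side bound to a two-sided one'' understates what is needed. The paper builds a fast-growing sequence $m_n$ (with $m_{n+1}=n!\sum_{i\le n}m_i$), enumerates all functions $\phi$ from $m_n$-sized subsets of $\omega_1$ into $\omega_1\times\omega$, and defines $c(\alpha,\beta)$ by the \emph{first} $n$ for which $\alpha$ lies in the domain of the $\phi$ indexed by $g_n(\beta)$. The pigeonhole inequality $k\cdot l\cdot\sum_{i\le n}m_i<m_{n+1}$ guarantees that some $a_\iota\in\mathcal A$ avoids all earlier domains, forcing the minimum to land at level $n{+}1$ and giving $c[a_\iota\times b]$ the prescribed value. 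Finite-to-one fibers come for free from this ``first $n$'' definition, not from a separate tagging step.

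For $(4)\implies(5)$, the appeal to \cite[Lemma~9]{strongcoloringpaper} is a red herring: that lemma needs the space of witnessing colorings to be non-null, which is not what (4) gives you. The paper instead constructs $d$ directly from $c$ and $p$: fix an enumeration $\langle\eta_\gamma\rangle$ of all maps $k\times l\times t\to\omega_1$, encode the intended row/column indices via Engelking--Kar\l owicz functions $h_i,h_j$, and set $d(\alpha,\beta)=\eta_{\gamma_\delta}(h_{i_\delta}(\alpha),h_{j_\delta}(\beta),p(\alpha,\beta))$ where $c(\alpha,\beta)=(\delta,\iota)$ decodes $(i_\delta,j_\delta,\gamma_\delta)$. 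Your intuition about $\ell_\infty$-coherence is right in spirit---it is used to bound $p(a(n),b(m))$ by $p(a(n),\epsilon)+q$ uniformly for $b\in\mathcal B'$, so that a single finite table $\eta\in{}^{k\times l\times t}\omega_1$ captures all relevant values---but the mechanism is a lookup table indexed by $(n,m,p(\alpha,\beta))$, not a ``table of shifts'' fed by the $\omega$-coordinate of $c$.
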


\begin{proof}  For the rest of the proof we fix a bijection $\pi:\omega\leftrightarrow \omega\times \omega$
and, by the Engelking-Karlowicz theorem \cite{ek}, we fix  a sequence $\langle h_j\mid j<\omega\rangle$
of functions from $\omega_1$ to $\omega$
such that  for every set $x\in[\omega_1]^{<\aleph_0}$ and a function
$h:x\rightarrow\omega$ there exists $j<\omega$ such that $h\s h_j$.

$(1)\implies(2)$: This is Proposition~2.2 of \cite{MR3694336}.

$(2)\implies(3)$:
Let $\vec f$ witness  Clause~(2).  For every $\beta<\omega_1$ fix a surjection $e_\beta:\omega\rightarrow\beta+1$.
Define a sequence $\vec g=\langle g_n\mid n<\omega\rangle$ of functions from
$\omega_1$ to $\omega_1$, as follows.
Given $n<\omega$, let $(m,j):=\pi(n)$ and  for every $\beta<\omega_1$ set
$$g_n(\beta):=f_m(e_\beta(h_j(\beta))).$$

To see that $\vec g$ witnesses Clause~(3), fix an arbitrary
uncountable pairwise disjoint subfamily $\mathcal
B\s[\omega_1]^{<\aleph_0}$ and some $k<\omega$. We shall find an
integer $n>k$ such that, for every $\gamma<\omega_1$ there is some
$b\in\mathcal B$ such that $g_n[b]=\{\gamma\}$.

For every $b\in\mathcal B$, define a function $h^b:b\rightarrow\omega$ via:
$$h^b(\beta):=\min\{i<\omega\mid e_\beta(i)=\min(b)\}.$$
Fix $j'<\omega$ for which $\mathcal B':=\{ b\in\mathcal B\mid h^b\s h_{j'}\}$
is uncountable, and then let $$m':=\max(\{0\}\cup\{m<\omega\mid \pi^{-1}(m,j')\le k\}).$$
Evidently, $B_0:=\{\min(b)\mid b\in\mathcal B'\}$ is uncountable.
Next, for every $i\le m'$ such that $B_i$ has already been defined, proceed as 
follows:

$\br$  If $Z_i:=\{ \beta\in B_i\mid f_i(\beta)\neq 0\}$
is uncountable, then let $B_{i+1}:=Z_i$;

$\br$ Otherwise, let $B_{i+1}:=B_i\setminus Z_i$.

In either case, $B_{i+1}\s B_i$ is uncountable with
$f_i[B_{i+1}]\neq\omega_1$.

Finally, as $B_{m'+1}$ is uncountable,  let us pick, by the choice of $\vec f$,
an integer $m<\omega$ such that $f_m[B_{m'+1}]=\omega_1$.

For all $i\le m'$, $f_i[B_{m'+1}]\s f_i[B_{i+1}]\subsetneq\omega_1$, so
$m>m'$.
In particular, $n:=\pi(m,j')$ is larger than $k$.
To see that $n$ is as sought, let $\gamma\in \omega_1=f_m[B_{m'+1}]$ be
arbitrary. Pick $\beta'\in B_{m'+1}$ with $f_m(\beta')=\gamma$.
As $\beta'\in B_{m'+1}\s B_0$, let us pick $b\in\mathcal B$ such that
$h^b\s h_{j'}$ and $\min(b)=\beta'$.
Let $\beta\in b$ be arbitrary. Then
$$g_{n}(\beta)=f_m(e_\beta(h_{j'}(\beta)))=f_m(e_\beta(h^b(\beta)))=f_m(\beta') =\gamma.$$
So $g_{n}[b]=\{\gamma\}$, as required.

$(3)\implies(4)$: The proof here is inspired by Miller's proof of \cite[Proposition~4]{Miller}.
Define an eventually increasing sequence of integers $\langle m_n\mid
n<\omega\rangle$ by recursion, setting $m_0:=1$, and $m_{n+1}:=n!\cdot
(\sum_{i\le n}m_i)$ for every $n<\omega$.  For every $n<\omega$, let
$\Phi_n:=\bigcup\{{}^x(\omega_1\times\omega)\mid x\s \omega_1,
|x|=m_n\}$.  Evidently, $|\Phi_n|=\omega_1$, so we may fix an
injective enumeration $\langle \phi_n^\gamma\mid
\gamma<\omega_1\rangle$ of $\Phi_n$.

Let $\vec g$ witness Clause~(3).
Define a coloring $d:[\omega_1]^2\rightarrow(\omega_1\times\omega)\times\omega$ by letting for all
$\alpha<\beta<\omega_1$:
$$d(\alpha,\beta):=\begin{cases}
((\alpha,0),0)&\text{if }\alpha\notin\bigcup_{i<\omega}\dom(\phi_i^{g_i(\beta)});\\
(\phi_n^{g_n(\beta)}(\alpha),n+1)&\text{if }n=\min\{i<\omega\mid
\alpha\in\dom(\phi_i^{g_i(\beta)})\}.
\end{cases}$$

Finally, define a coloring $c:[\omega_1]^2\rightarrow\omega_1\times\omega$ by letting $c(\alpha,\beta):=(\gamma,\pi(\iota,n))$
iff $d(\alpha,\beta)=((\gamma,\iota),n)$. It is clear that $d$ has finite-to-one fibers,
and hence so does $c$.
To see that $c$ witnesses Clause~(4), fix positive integers $k,l$ along with $\mathcal A,\mathcal B$ such that:
\begin{itemize}
 \item $\mathcal A$ is an infinite pairwise disjoint subfamily  of $[\omega_1]^k$,
 \item $\mathcal B$ is an uncountable pairwise disjoint  subfamily of $[\omega_1]^l$, and
\item $a<b$ for all $a\in\mathcal A$ and $b\in\mathcal B$.
 \end{itemize}

By the choice of $\vec g$, let us fix an integer $n>\max\{k,l\}$
such that, for every $\gamma<\omega_1$, for some $b\in\mathcal B$,
$g_{n+1}[b]=\{\gamma\}$.
As $m_{n+1}$ is divisible by $k$,
we now fix an injective sequence $\langle a_\iota\mid \iota<{m_{n+1}\over k}\rangle$ consisting of elements of $\mathcal A$.

\begin{claim} There exists $\iota<{m_{n+1}\over k}$ such that, for every
$\delta<\omega_1$, there is $b\in\mathcal B$,
such that, for every $\beta\in b$:
 $$\{\alpha<\beta\mid c(\alpha,\beta)=(\delta,\pi(\iota,n+2))\}=a_\iota.$$
\end{claim}
\begin{proof} Suppose not. Then, for every $\iota<{m_{n+1}\over k}$,
we may find some $\delta_\iota<\omega_1$ such that for all $b\in\mathcal B$, for some $\beta\in b$,
 $$\{\alpha<\beta\mid d(\alpha,\beta)=((\delta_\iota,\iota),n+2)\}\neq a_\iota.$$
Define a function $\phi:\biguplus\{a_\iota\mid \iota<{m_{n+1}\over k}\}\rightarrow\omega_1\times\omega$ by letting
$\phi(\alpha):=(\delta_\iota,\iota)$ iff $\alpha\in a_\iota$.
As $|\biguplus\{a_\iota\mid \iota<{m_{n+1}\over k}\}|={m_{n+1}\over k}\cdot k=m_{n+1}$, we infer that
$\phi\in\Phi_{n+1}$,
so we may fix $\gamma<\omega_1$ such that $\phi=\phi_{n+1}^\gamma$. Now, pick
$b\in\mathcal B$ with $g_{n+1}[b]=\{\gamma\}$.

For every $i\le n$ and $\beta\in b$, let
$x_i^\beta:=\dom(\phi_i^{g_i(\beta)})$,
so that $|x_i^\beta|=m_i$.
Next, set $x:=\bigcup\{ x_i^\beta\mid i\le n,\beta\in b\}$.
As $|b|=l$, we infer that $|x|\le l\cdot \sum_{i\le n}m_i$.
Thus $$k\cdot |x|\le  k\cdot l\cdot \sum_{i\le n}m_i< n!\cdot \sum_{i\le n}m_i=m_{n+1}.$$
In particular, $|x|<{m_{n+1}\over k}$, so we may fix $\iota<{m_{n+1}\over k}$ such that $a_\iota\cap x=\emptyset$.

Let $\beta\in b$ be arbitrary. Consider the set
$$A:=\{\alpha<\beta\mid d(\alpha,\beta)=((\delta_\iota,\iota),n+2)\}.$$
As $g_{n+1}(\beta)=\gamma$, we infer that $\phi_{n+1}^{g_{n+1}(\beta)}=\phi$, so, by the definition of $d$:
 $$A\s \{\alpha<\beta\mid \phi_{n+1}^{g_{n+1}(\beta)}(\alpha)=(\delta_\iota,\iota)\}\s \{\alpha<\beta\mid \phi(\beta)(\alpha)=(\delta_\iota,\iota)\}=a_\iota.$$

On the other hand, for every $\alpha\in a_\iota\s \dom(\phi_{n+1}^{g_{n+1}(\beta)})$,
as $\alpha\notin x$, it follows that $\min\{i<\omega\mid \alpha\in\dom(\phi_i^{g_i(\beta)})\}=n+1$,
and hence
$$d(\alpha,\beta)=(\phi_{n+1}^{g_{n+1}(\beta)}(\alpha),n+2)=(\phi(\alpha),n+2)=((\delta_\iota,\iota),n+2),$$
so that $\alpha\in A_i$.
Altogether, $A=a_\iota$, contradicting the choice of $\delta_\iota$.
\end{proof}

$(4)\implies(5)$: Fix $c$ witnessing Clause~(4).  
Let $\langle \eta_\gamma\mid \gamma<\omega_1\rangle$ be some injective enumeration
of $\bigcup\{{}^{k\times l\times t}\omega_1\mid k,l,t<\omega\}$ and
let $\langle (i_\delta,j_\delta,\gamma_\delta)\mid \delta<\omega_1\rangle$ be
some injective enumeration of
$\omega\times\omega\times\omega_1$,

Now, given any $\ell_\infty$-coherent partition $p:[\omega_1]^2\rightarrow\omega$,
define a coloring $d:[\omega_1]^2\rightarrow\omega_1$ as follows.
Given $(\alpha,\beta)\in[\omega_1]^2$, let $(\delta,\iota):=c(\alpha,\beta)$
and then set $$d(\alpha,\beta):=\begin{cases}\eta_{\gamma_\delta}(h_{i_\delta}(\alpha),h_{j_\delta}(\beta),p(\alpha,\beta))&\text{if }(h_{i_\delta}(\alpha),h_{j_\delta}(\beta),p(\alpha,\beta))\in\dom(\eta_{\gamma_\delta})\\
0&\text{otherwise}.\end{cases}$$

To see that $d$ witnesses Clause~(5), fix $k,l,\mathcal A,\mathcal B$ and $\epsilon<\omega_1$ such that:
\begin{itemize}
 \item $\mathcal A$ is an infinite pairwise disjoint subfamily  of $[\omega_1]^k$,
 \item $\mathcal B$ is an uncountable pairwise disjoint subfamily of $[\omega_1]^l$ and
\item $\max(a)<\epsilon\le\min(b)$ for all $a\in\mathcal A$ and all $b\in\mathcal B$.
 \end{itemize}

For
every $x\in\mathcal A\cup\mathcal B$, define a function
$h^x:x\rightarrow\omega$ via:
$$h^x(\beta):=\otp(x\cap\beta).$$
Now pick $j'<\omega$ for which
$\mathcal B':=\{ b\in\mathcal B\mid h^b\s h_{j'}\}$ is uncountable.
As $p$ is $\ell_\infty$-coherent, we may shrink $\mathcal B'$ further and assume the existence of some $q<\omega$ such that, for all $b\in\mathcal B'$:
$$\{|p(\alpha,\epsilon)-p(\alpha,\beta)|\mid \beta\in b\}\s q.$$

Now, as $|\mathcal A|=\aleph_0$ and $|\mathcal B'|=\aleph_1$, by the choice
of $c$, we may fix $a\in\mathcal A$ such that, for every
$\delta<\omega_1$, there are $b\in\mathcal B'$ and $\iota<\omega$ such that $c[a\times b]=\{(\delta,\iota)\}$.

\begin{claim}
Let $\langle \tau_{n,m}\mid n<k,m<l\rangle$ be a matrix of functions from $\omega$ to $\omega_1$.
Then there exists $b\in\mathcal B'$ satisfying that,  for all $n<k$ and $m<l$,
$$d(a(n),b(m))=\tau_{n,m}(p(a(n),b(m))).$$
\end{claim}
\begin{proof} Fix $i'<\omega$ such that $h^a\s h_{i'}$.
Let $t:=\max\{p(\alpha,\epsilon)+q\mid \alpha\in a\}$.
Define a function $\eta:k\times l\times t\rightarrow\omega_1$ via:
$$\eta(n,m,s):=\tau_{n,m}(s).$$
Let $\delta<\omega_1$ be such that $(i_\delta,j_\delta,\eta_{\gamma_\delta})=(i',j',\eta)$.
Pick $b\in\mathcal B'$ and $\iota<\omega$ such that $c[a\times b]=\{(\delta,\iota)\}$.
Now, given $n<k$ and $m<l$, we have $c(a(n),b(m))=(\delta,\iota)$,

$p(a(n),b(m))< p(a(n),\epsilon)+q\le t$, so that
$$\begin{array}{lll}
d(a(n),b(m))&=&\eta_{\gamma_\delta}(h_{i_\delta}(a(n)),h_{j_\delta}(b(m)),p(a(n),b(m)))\\
&=&\eta(h^a(a(n)),h^b(b(m)),p(a(n),b(m)))\\
&=&\eta(n,m,p(a(n),b(m)))\\
&=&\tau_{n,m}(p(a(n),b(m))),
\end{array}$$
as sought.
\end{proof}
$(5)\implies(6)$: Let $d$ witness Clause~(5) with respect to the constant partition $p:[\omega_1]^2\rightarrow1$.
Define a function $e:[\omega_1]^2\rightarrow\omega$ by letting
$e(\alpha,\beta):=d(\alpha,\beta)$ whenever $d(\alpha,\beta)<\omega$,
and $e(\alpha,\beta):=0$, otherwise.
Clearly, $e$ witnesses (6).

$(6)\implies(7)$: Let $e$ witness Clause~(6). Define $X=\{ x_\beta\mid
\beta<\omega_1\}$, as follows.  For every $\beta<\omega_1$, define a
function $x_\beta:\omega\rightarrow\omega$ via
$x_\beta(n):=e(n,\beta)$.  Towards a contradiction, suppose that
$y:\omega\rightarrow\omega$ is a counterexample.  It follows that
there exists a large enough $n<\omega$ for which
$B:=\{\beta<\omega_1\mid \dom(x_\beta\cap y)\s n\}$ is uncountable.
By the choice of $e$, we may now fix an integer $\alpha>n$ such that
$\{e(\alpha,\beta)\mid \beta\in B\setminus(\alpha+1)\}=\omega$.  In
particular, we may find $\beta\in B$ such that
$e(\alpha,\beta)=y(\alpha)$.  Altogether,
$x_\beta(\alpha)=e(\alpha,\beta)=y(\alpha)$ contradicting the fact
that $\beta\in B$ and $\alpha>n$.

$(7)\implies(1)$: By Theorem~1.3 of \cite{MR613787}.
\end{proof}

\begin{cor}[Theorem~A']\label{cor32} $non(\mathcal M)=\aleph_1$ iff $\proo_p$ holds for all $\ell_\infty$-coherent partitions $p:[\omega_1]^2\rightarrow\omega$.\qed
\end{cor}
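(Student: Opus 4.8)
The plan is to extract Corollary~\ref{cor32} straight from Theorem~\ref{thma}, so the only thing to verify is that the statement ``$\proo_p$ holds for every $\ell_\infty$-coherent $p$'' sits between two of the equivalent clauses of that theorem: that Clause~(5) implies it and that it implies Clause~(6). Here $\proo_p$ is, in the notation of Definition~\ref{prelations}, the third relation listed there with $\theta=\omega_1$ and $\chi=\omega$: the assertion that there is a coloring $d:[\omega_1]^2\to\omega_1$ such that for every $n<\omega$ and all pairwise disjoint $\mathcal A,\mathcal B\s[\omega_1]^n$ with $|\mathcal A|=\omega$ and $|\mathcal B|=\omega_1$, there is $a\in\mathcal A$ for which every matrix $\langle\tau_{i,j}\mid i,j<n\rangle$ of functions from $\omega$ to $\omega_1$ is realized by some $b\in\mathcal B$ with $a<b$.

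For ``(M)$\implies\proo_p$ for every $\ell_\infty$-coherent $p$'', I would apply $(1)\implies(5)$ of Theorem~\ref{thma}. Fixing such a $p$, the coloring $d$ that Clause~(5) attaches to $p$ is the desired witness: given $n<\omega$ and pairwise disjoint $\mathcal A,\mathcal B\s[\omega_1]^n$ with $|\mathcal A|=\omega$ and $|\mathcal B|=\omega_1$, fix an ordinal $\epsilon<\omega_1$ with $\bigcup\mathcal A\s\epsilon$ (possible since $\mathcal A$ is countable, hence $\bigcup\mathcal A$ is bounded in $\omega_1$), and pass to the still uncountable $\mathcal B':=\{b\in\mathcal B\mid\min b\ge\epsilon\}$, so that $a<b$ holds for all $a\in\mathcal A$ and $b\in\mathcal B'$; then Clause~(5) with $k=l=n$, applied to $\mathcal A$ and $\mathcal B'$, returns $a\in\mathcal A$ such that each matrix $\langle\tau_{i,j}\mid i,j<n\rangle$ is realized by some $b\in\mathcal B'\s\mathcal B$, and automatically $a<b$.

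For the converse I would test the hypothesis on the trivial partition. Since $p_0:[\omega_1]^2\to 1$ is $\ell_\infty$-coherent, $\proo_{p_0}$ provides a coloring $d:[\omega_1]^2\to\omega_1$. Given an infinite $A\s\omega_1$ and an uncountable $B\s\omega_1$, apply $\proo_{p_0}$ in dimension $n=1$ to $\mathcal A:=\{\{\alpha\}\mid\alpha\in A_0\}$ (for a countably infinite $A_0\s A$) and $\mathcal B:=\{\{\beta\}\mid\beta\in B\}$: this yields $\alpha\in A$ such that $d[\{\alpha\}\times(B\setminus(\alpha+1))]=\omega_1$. Setting $e(\alpha,\beta):=d(\alpha,\beta)$ when $d(\alpha,\beta)<\omega$ and $e(\alpha,\beta):=0$ otherwise defines one coloring $e:[\omega_1]^2\to\omega$ witnessing Clause~(6), because $\omega\s\omega_1=d[\{\alpha\}\times(B\setminus(\alpha+1))]$ forces $\{e(\alpha,\beta)\mid\beta\in B\setminus(\alpha+1)\}=\omega$. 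Now $(6)\implies(1)$ of Theorem~\ref{thma} completes the cycle.

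There is no genuine obstacle here; it is pure bookkeeping on top of Theorem~\ref{thma}. The one point deserving a moment's care is the translation between the square-matrix form of $\pr_0$ and the $k,l$-indexed clauses of Theorem~\ref{thma}, together with the remark that surjectivity onto $\omega_1$ is preserved by truncation to $\omega$ --- this is exactly what lets a single $d$ over the constant partition serve as a witness to Clause~(6).
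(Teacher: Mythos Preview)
Your proposal is correct and is exactly the approach the paper intends: the corollary is stated with a bare \qed\ because the statement is sandwiched between Clauses~(5) and~(6) of Theorem~\ref{thma}, and your argument spells out precisely this sandwich (including the same truncation $e(\alpha,\beta):=d(\alpha,\beta)$ if $d(\alpha,\beta)<\omega$ that the paper uses for $(5)\Rightarrow(6)$). The only detail you add beyond the paper is the passage to $\mathcal B'=\{b\in\mathcal B\mid\min b\ge\epsilon\}$ to recover the $a<b$ requirement of $\pr_0$, which is routine.
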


\begin{cor} In the following, $(1)\implies(2)\implies(3)$ and none of the implications is revertible.
\begin{enumerate}
\item $\prz{\aleph_1}{\aleph_0}{1}{\aleph_1}{\aleph_0}$;
\item $\pr_0(\aleph_1,\aleph_1,\aleph_1,\aleph_0)$;
\item $\pr_0(\aleph_1,\aleph_1,\aleph_1,n)$ for all $n<\omega$.
\end{enumerate}
\end{cor}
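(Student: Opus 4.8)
The plan is to dispose of $(1)\implies(2)\implies(3)$ first — both are essentially formal — and then to isolate the two consistency statements hiding behind the clause ``none of the implications is revertible''.

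For $(1)\implies(2)$, observe that the constant map $p:[\omega_1]^2\to 1$ is (trivially) $\ell_\infty$-coherent, so $(1)$ provides a colouring $c:[\omega_1]^2\to\omega_1$ witnessing $\proo_p$ for this $p$. Reading off Definition~\ref{prelations} with $p$ constant, $c$ has the following feature: for every $n<\omega$ and all pairwise disjoint $\mathcal A\s[\omega_1]^n$ with $|\mathcal A|=\aleph_0$ and $\mathcal B\s[\omega_1]^n$ with $|\mathcal B|=\aleph_1$, there is $a\in\mathcal A$ such that for every matrix $\langle\gamma_{i,j}\mid i,j<n\rangle$ of ordinals below $\omega_1$ there is $b\in\mathcal B$ with $a<b$ and $c(a(i),b(j))=\gamma_{i,j}$ for all $i,j<n$. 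Given any pairwise disjoint $\langle a_\xi\mid\xi<\omega_1\rangle\s[\omega_1]^n$, I would split it via $\mathcal A:=\{a_\xi\mid\xi<\omega\}$ and $\mathcal B:=\{a_\xi\mid\omega\le\xi<\omega_1\}$; the resulting $a=a_{\xi_0}$ (with $\xi_0<\omega$), together with the members of $\mathcal B$ it works for (whose indices are $\ge\omega$, hence $>\xi_0$), deliver $\pr_0(\aleph_1,\aleph_1,\aleph_1,\aleph_0)$; after the usual pigeonhole thinning to a single size $n$ this covers arbitrary pairwise disjoint families of finite sets. (One could equally route through Corollary~\ref{cor32} and apply Clause~(5) of Theorem~\ref{thma} to the constant partition.) For $(2)\implies(3)$, a single colouring witnessing $\pr_0(\aleph_1,\aleph_1,\aleph_1,\aleph_0)$ witnesses $\pr_0(\aleph_1,\aleph_1,\aleph_1,n)$ for every $n<\omega$ verbatim, the latter being its restriction to families living inside $[\omega_1]^m$ for $m<n$; so this implication is immediate.

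For the non-reversibility, recall from Corollary~\ref{cor32} that $(1)$ is equivalent to $\non(\mathcal M)=\aleph_1$, whereas $(2)$ is strictly weaker: it merely asks that \emph{some} pair realize each pattern, not that a single fixed $a$ handle them all. Thus, to show that $(2)$ does not imply $(1)$, I would pass to a model of $\mathfrak b=\aleph_1$ together with $\non(\mathcal M)>\aleph_1$ — for instance, the extension of a model of $\ch$ by $\aleph_2$ random reals, where random forcing is $\omega^\omega$-bounding, hence $\mathfrak b=\aleph_1$, while $\non(\mathcal M)=\aleph_2$: there $(1)$ fails, but $(2)$ holds, since $\mathfrak b=\aleph_1$ already yields $\pr_0(\aleph_1,\aleph_1,\aleph_1,\aleph_0)$ by a classical theorem of Todor{\v{c}}evi{\'c} on walks over $\omega_1$. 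To show that $(3)$ does not imply $(2)$, I would invoke the known separation, in the strong-colouring hierarchy over $\omega_1$, between the finite-dimensional relations $\pr_0(\aleph_1,\aleph_1,\aleph_1,n)$ ($n<\omega$) and the $\omega$-dimensional relation $\pr_0(\aleph_1,\aleph_1,\aleph_1,\aleph_0)$: one takes a model in which all of the former hold (being robust under the relevant forcing) while the latter fails.

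The forward implications are pure bookkeeping; the content lies in the two non-reversibility clauses, and I expect the genuine obstacle to be ``$(3)$ does not imply $(2)$''. Unlike ``$(2)$ does not imply $(1)$'', which is settled by an off-the-shelf model, this last point requires pointing to — or constructing — a forcing that kills exactly the $\omega$-dimensional relation while sparing every finite-dimensional one, and then verifying both halves of that claim.
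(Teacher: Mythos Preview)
Your treatment of the forward implications and of $(2)\notimplies(1)$ is fine and matches the paper's argument (random reals over a model of $\ch$ give $\mathfrak b=\aleph_1<\non(\mathcal M)$; then invoke Corollary~\ref{cor32} for the failure of $(1)$ and Todor{\v{c}}evi{\'c}'s theorem that $\mathfrak b=\aleph_1$ implies $\pr_0(\aleph_1,\aleph_1,\aleph_1,\aleph_0)$ for $(2)$).

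The gap is in your handling of $(3)\notimplies(2)$. You describe this as ``the genuine obstacle'', anticipating a delicate forcing that kills the $\omega$-dimensional relation while preserving each finite-dimensional one. No such construction is needed: by the theorem of Peng and Wu \cite{MR3742590}, $\pr_0(\aleph_1,\aleph_1,\aleph_1,n)$ holds for every $n<\omega$ outright in $\zfc$. Hence any model in which $(2)$ fails already separates $(3)$ from $(2)$, and such models are plentiful --- for instance, $\MA_{\aleph_1}$ refutes $\pr_0(\aleph_1,\aleph_1,\aleph_1,\aleph_0)$ (indeed, it refutes even the much weaker $\omega_1\nrightarrow[\omega_1]^2_{\omega_1}$ in the rectangular form needed here). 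So the step you flagged as hardest is in fact the easiest once the Peng--Wu result is on the table; your proposal as written leaves the separation unproved because it neither cites this $\zfc$ theorem nor supplies an alternative.
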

\begin{proof} To see that (2) does not imply (1), recall that $\non(\mathcal M)>\aleph_1=\mathfrak b$ is consistent (e.g., after adding $\aleph_2$ random reals to a model of $\ch$)
and that Todor{\v{c}}evi{\'c} \cite{TodLC86} proved that Clause~(2) is a consequence of $\mathfrak b=\aleph_1$.

To see that (3) does not imply (2) recall that Clause~(2) is refuted by $\MA_{\aleph_1}$,
and that Peng and Wu \cite{MR3742590} proved Clause~(3) in $\zfc$.
\end{proof}

We conclude this section by pointing out that by a proof similar to that of \cite[Theorem~27]{strongcoloringpaper},
$\proo_p$ for all countable $p$ is compatible with the failure of $\ch$:

\begin{thm}\label{ThePr0Cohen}
In the forcing extension after adding adding $\aleph_2$ many Cohen reals, for every partition $p:[\omega_1]^2\rightarrow\omega$,
${\proo_{p}}$ holds. \qed
\end{thm}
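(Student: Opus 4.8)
The plan is to force with the poset $\mathbb P:=\add(\omega,\omega_2)$ of finite partial functions, reading the coloring off from some of the added Cohen reals. We shall use two standard features: $\mathbb P$ is ccc, so $\omega_1$ is preserved and any object of size ${\le}\aleph_1$ in the extension lies in $V[G\restriction A]$ for some $A\in[\omega_2]^{\aleph_1}$, with $\mathbb P/(G\restriction A)$ again Cohen forcing over $V[G\restriction A]$; and $\add(\omega,\kappa)$ is, up to isomorphism, insensitive to rearranging its index set. Fix in $V$ surjections $e_\beta:\omega\rightarrow\beta+1$ $(\beta<\omega_1)$, chosen by a routine recursion so that $\{\beta<\omega_1\mid e_\beta(k)=\gamma\}$ is unbounded for every $k<\omega$ and $\gamma<\omega_1$. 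Now let $G$ be $\mathbb P$-generic, fix a partition $p:[\omega_1]^2\rightarrow\omega$ in $V[G]$, and --- using $|p|=\aleph_1$ --- fix $A\in[\omega_2]^{\aleph_1}$ with $p\in V[G\restriction A]$. The tail forcing is then isomorphic to $\add(\omega,[\omega_1]^2)\times\add(\omega,\omega_2)$, Cohen-generic over $V[G\restriction A]$; letting $F:[\omega_1]^2\rightarrow\omega$ be the generic function added by the first factor, define $c:[\omega_1]^2\rightarrow\omega_1$ by $c(\alpha,\beta):=e_\beta(F(\alpha,\beta))$. It remains to show that $c$ witnesses $\proo_p$ in $V[G]$.

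So fix, in $V[G]$, an $n<\omega$, an infinite pairwise disjoint $\mathcal A\s[\omega_1]^n$, and an uncountable pairwise disjoint $\mathcal B\s[\omega_1]^n$ with $a<b$ whenever $a\in\mathcal A$ and $b\in\mathcal B$; the task is to produce $a\in\mathcal A$ such that for every matrix $\langle\tau_{i,j}\mid i,j<n\rangle$ of functions from $\omega$ to $\omega_1$ there is $b\in\mathcal B$ with $c(a(i),b(j))=\tau_{i,j}(p(a(i),b(j)))$ for all $i,j<n$. Argue in $V[G\restriction A]$: fix names $\dot{\mathcal A},\dot{\mathcal B}$ and a condition $r$ forcing all of the above; it suffices to show $r$ does not force that no $a$ works. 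By ccc, strengthen $r$ so that $\bigcup\dot{\mathcal A}$ is forced below some fixed $\zeta<\omega_1$; then $\dot{\mathcal A}$ names an infinite subset of the countable set $[\zeta]^n$, and $\mathcal A^\ast:=\{a\in[\zeta]^n\mid\exists s\le r\,(s\Vdash\check a\in\dot{\mathcal A})\}$ is infinite. Next, using ccc again, fix an uncountable family $\langle(b_\xi,r_\xi)\mid\xi<\omega_1\rangle$ with $r_\xi\le r$, $r_\xi\Vdash\check b_\xi\in\dot{\mathcal B}$, the $b_\xi$'s pairwise disjoint with $\langle\min b_\xi\mid\xi<\omega_1\rangle$ strictly increasing, and --- after a $\Delta$-system reduction and pigeonholing on the finitely many coordinates in the common root --- such that $\dom(r_\xi)=\mathcal R\uplus E_\xi$ with $\mathcal R$ and $r_\xi\restriction\mathcal R$ fixed and the $E_\xi$'s pairwise disjoint. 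The crux is to locate $a^\ast\in\mathcal A^\ast$ and an uncountable $\Xi\s\omega_1$ such that $(a^\ast\times b_\xi)\cap E_\xi=\emptyset$ for all $\xi\in\Xi$; that is, $r_\xi$ leaves $F(a^\ast(i),b_\xi(j))$ undetermined for all $i,j<n$.

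Granting the crux, the verification is completed by a density argument. Given a matrix name $\langle\dot\tau_{i,j}\mid i,j<n\rangle$ and any $s\le r$ agreeing with the $r_\xi\restriction\mathcal R$ on $\mathcal R$, use that $p\in V[G\restriction A]$ to pass to an uncountable $\Xi_0\s\Xi$ on which the finite pattern $\langle p(a^\ast(i),b_\xi(j))\mid i,j<n\rangle$ is a fixed tuple $\langle m_{i,j}\rangle$; by ccc each $\dot\tau_{i,j}(\check m_{i,j})$ is forced below some $\zeta'<\omega_1$; then pick $\xi\in\Xi_0$ with $\min b_\xi>\zeta'$ and $r_\xi$ compatible with $s$, and extend $s\cup r_\xi$ by deciding $\dot\tau_{i,j}(\check m_{i,j})=\check v_{i,j}$ and placing $F(a^\ast(i),b_\xi(j))$ into $e_{b_\xi(j)}^{-1}\{v_{i,j}\}$ (nonempty, since $v_{i,j}\le\zeta'<b_\xi(j)$) for all $i,j<n$. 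This extension forces $\check b_\xi\in\dot{\mathcal B}$ together with $c(a^\ast(i),b_\xi(j))=v_{i,j}=\dot\tau_{i,j}(p(a^\ast(i),b_\xi(j)))$ for all $i,j<n$, so $\check a^\ast$ works. As $\mathcal A,\mathcal B$ and then $p$ were arbitrary, $c$ witnesses $\proo_p$.

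The main obstacle is the crux step, and this is exactly where the present situation departs from that of Theorem~A: there the partition lives in the ground model, so the Cohen reals defining the coloring can be put on coordinates mutually generic over a model holding all the combinatorial data, whereas here $\mathcal A$ and $\mathcal B$ may be manufactured from the very reals defining $c$, and one must rule out that an uncountable $\mathcal B$ entangles itself with all of the (countably many) members of $\mathcal A$ so as to pre-decide a coloring value on each candidate $a^\ast$. One argues by contradiction: if no such $a^\ast$ exists, then discarding for each $a\in\mathcal A^\ast$ the countably many $\xi$ with $(a\times b_\xi)\cap E_\xi=\emptyset$ leaves an uncountable set of $\xi$ on which every member of $\mathcal A^\ast$ meets $E_\xi$; pigeonholing over the finite sets $E_\xi$, then over $\zeta$ and $n$, and finally over the finitely many values the $r_\xi$ assign to the offending coordinates, one extracts a single coordinate-shape and value carried by uncountably many $r_\xi$ while being attached to infinitely many members of $\mathcal A^\ast$; using that $\mathcal B$ is uncountable, hence unbounded, whereas each $E_\xi$ is finite, this forces $\mathcal B$ to be a countable union of bounded subsets of $\omega_1$ --- a contradiction. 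This follows the pattern of the proof of \cite[Theorem~27]{strongcoloringpaper}, to which we refer for the full bookkeeping.
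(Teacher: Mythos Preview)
The paper does not supply a proof of this theorem; it merely records that the argument is similar to \cite[Theorem~27]{strongcoloringpaper} and closes with a \qed. Your sketch follows the same overall strategy and explicitly defers the bookkeeping to that same reference, so at the level of approach there is nothing to compare. There is, however, a genuine gap in your crux step.

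Your $\mathcal A^\ast$ is the set of all $a\in[\zeta]^n$ that \emph{some} extension of $r$ forces into $\dot{\mathcal A}$; there is no reason for $\mathcal A^\ast$ itself to be pairwise disjoint, and without that the pigeonholing you describe does not produce a contradiction. Concretely, set $Z_\xi:=\{\alpha<\zeta\mid\exists\beta\in b_\xi\,((\alpha,\beta)\in E_\xi)\}$; after bounding $|E_\xi|\le M$, the negation of the crux says that for co-countably many $\xi$ the set $Z_\xi$ (of size $\le M$) meets every $a\in\mathcal A^\ast$, and this is perfectly possible when the members of $\mathcal A^\ast$ overlap. The fix is short: since $r$ forces $\dot{\mathcal A}$ to be infinite and pairwise disjoint, $\mathcal A^\ast$ contains an infinite pairwise disjoint subfamily $\mathcal A'$ (this is a $\Sigma^1_1$ property of the countable set $\mathcal A^\ast$, witnessed by $\mathcal A$ itself in any generic extension, hence true in $V[G\restriction A]$). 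Each $Z_\xi$ then meets at most $M$ members of $\mathcal A'$, so by pigeonhole some $a^\ast\in\mathcal A'$ has $a^\ast\cap Z_\xi=\emptyset$ for uncountably many $\xi$ --- which is the crux.

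A second, smaller omission: having selected $a^\ast\in\mathcal A^\ast$, you must also fix a condition $s^\ast\le r$ (extending the root) with $s^\ast\Vdash\check a^\ast\in\dot{\mathcal A}$, and run the density argument below $s^\ast$ rather than merely below $r$; since $s^\ast$ is finite this costs only finitely many $\xi\in\Xi$. You should likewise decide the values $\dot\tau_{i,j}(\check m_{i,j})$ \emph{before} committing to a particular $\xi$, so that the condition making those decisions does not accidentally touch a coordinate in $a^\ast\times b_\xi$.
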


\section{Strongly Luzin sets and strong colorings over partitions}\label{Luzin}
Luzin sets are tightly connected with strong colorings. In addition to Todor{\v{c}}evi{\'c}'s theorems
\cite[pp.~291]{TodActa},\cite[Proposition~6.4]{TodPartTop} that were improved by the main result of the previous section, an earlier result connecting Luzin sets with strong colorings 
may be found in \cite[Theorem~5.3]{MR500166}.
Likewise, Souslin trees give rise to strong colorings (see \cite[Lemma~6.6]{jensen}, \cite[Lemma~1]{MR371662}, \cite[\S5]{TodPartTop}, and \cite[\S3]{MR3805665}),
and \emph{coherent} Souslin trees have further strong coloring applications (see \cite[\S3.3]{paper36}).
By \cite[\S6]{AbSh:403}, the existence of a Souslin tree does not imply the existence of a coherent one.

Now we show that a strongly Luzin set together with  a coherent Souslin tree do not suffice to entail $\aleph_1\nrightarrow_p[\aleph_1]^2_{\aleph_0}$ for all countable partitions $p$. 

\begin{thm}\label{nonmkjbs64hb}
It is consistent that all of the following hold simultaneously:
\begin{itemize}
\item There exists a strongly Luzin set;
\item There exists a coherent Souslin tree;
\item There exists a partition $p:[\omega_1]^2\rightarrow\omega$ such that all colorings $c:[\omega_1]^2\rightarrow\omega$ are $p$-special,
that is, 
for  every coloring $c:[\omega_1]^2\rightarrow\omega$, there is a
  decomposition $\omega_1=\biguplus_{i<\omega}X_i$ such that for all
  $i,j<\omega$, $c$ is constant over $\{ (\alpha,\beta)\in[ X_i]^2\mid p(\alpha,\beta)=j\}$.
\end{itemize}
\end{thm}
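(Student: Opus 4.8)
## Proof plan for Theorem~\ref{nonmkjbs64hb}

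The plan is to start in a model of $\ch$ (in fact $\diamondsuit$) that contains a coherent Souslin tree $T$ that we will preserve, and to fix in this ground model a partition $p:[\omega_1]^2\rightarrow\omega$ of the kind produced by Lemma~\ref{jsdbtgy}, i.e., one witnessing $\mathfrak d=\aleph_1$ with injective, almost-disjoint and Cohen fibers, and enjoying the strong domination property stated there. Since the Luzin set we want will be added generically, the main task is to design a forcing iteration $\mathbb{P}$ of length $\omega_2$, with finite or countable support as appropriate, whose iterands do two things: (i) Cohen-type forcing that adjoins a Luzin set (adding $\aleph_1$ Cohen reals over each intermediate model keeps the accumulating set of generic reals Luzin, by the usual argument), and (ii) for each coloring $c:[\omega_1]^2\rightarrow\omega$ appearing in an intermediate extension, a forcing $\mathbb{Q}_c$ that generically adds a decomposition $\omega_1=\biguplus_{i<\omega}X_i$ making $c$ $p$-special. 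A bookkeeping function catches every name for a coloring, so in the final model every coloring is $p$-special, (L) holds, and — provided $T$ survives — a coherent Souslin tree exists.

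The key step is to isolate the specializing poset $\mathbb{Q}_c$ and prove it is well behaved. I would let a condition be a finite approximation: a finite partial function assigning to finitely many ordinals below some countable height a "color index" $i<\omega$, i.e., a finite piece of the eventual partition, together with the promise that on the part already committed, $c$ is constant on each $p$-fiber within each $X_i$. The crucial point, and this is where the properties of $p$ from Lemma~\ref{jsdbtgy} enter, is that the domination clause of that lemma lets one extend a condition by placing a new ordinal $\alpha$ into some class $X_i$ without creating a conflict: because for a condition with top part below $\epsilon$ one can find, via the function $h$ encoding "forbidden" colors, a tail $[\gamma,\omega_1)$ and a finite exceptional set $\Delta$ so that $p$ is injective on $(\epsilon\setminus\Delta)\times b$ and $p$-values are pushed above $h$, the fibers $\{p(\alpha,\beta)\mid \alpha\in X_i\cap\epsilon\}$ from distinct $\beta$'s barely overlap; this is what makes "$c$ constant on each fiber of $X_i$" a finitely-satisfiable constraint and gives the genericity needed to cover all of $\omega_1$. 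One then shows $\mathbb{Q}_c$ has precaliber $\aleph_1$ (or is $\sigma$-linked) by a $\Delta$-system argument on the finite domains together with a counting of the finitely many color patterns, so the whole iteration is ccc and adds no new reals beyond what the Cohen parts add.

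The hard part will be the simultaneous preservation: showing that the composite iteration still has a coherent Souslin tree at the end. The clean route is the standard one: prove $T$ remains Souslin after forcing with $\mathbb{P}$ by showing each iterand $\mathbb{Q}_c$, and the Cohen iterands, are $T$-preserving (do not add an uncountable antichain to $T$) and that $T$-preservation is iterable with the chosen support; for the specializing posets this amounts to a chain-condition argument carried out over $T$, using that the finitary, locally-determined nature of conditions in $\mathbb{Q}_c$ lets one run the usual "pressing down on a level of $T$ and amalgamating" argument exactly as for Cohen forcing. Equivalently, one may force with $T$ first (which adds a Cohen real, hence does no harm to the plan) and argue that in $V[T]$ the poset $\mathbb{P}$ is proper of size $\aleph_1$-style controllable, then reflect back; but the direct $T$-preservation route is cleaner and is parallel to the argument of \cite[Theorem~27]{strongcoloringpaper}, so I would follow that. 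Finally, verifying that the generically-added reals indeed form a Luzin set in the \emph{final} model — not merely in the Cohen subextension — requires the routine observation that the later specializing iterands add no reals, so meager sets coded later are already coded earlier, and Luzinness is preserved upward.
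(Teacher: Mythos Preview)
Your outline has a genuine gap at the Luzin step. The assertion that ``the later specializing iterands add no reals'' is false: conditions in $\mathbb Q(p,c)$ are finite triples $(a,f,w)$, so this is a Cohen-like ccc poset that certainly adds reals. Consequently your argument that ``meager sets coded later are already coded earlier'' collapses, and with it your Luzin preservation. Whether you take the Luzin set from the ground model or build it from interleaved Cohen reals, you must still show that the specializing iterands do not introduce a nowhere-dense tree capturing uncountably many of your reals; this is nontrivial and is in fact where the real work of the section lies.

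The paper handles this by fixing a Luzin set $L$ in the ground model (not adding it generically) and then proving that $L$ remains Luzin after the entire $p$-iteration. That is the content of Lemma~\ref{LuzinPres}, which in turn rests on the main technical Lemma~\ref{Oawewlkn11}: for a good elementary submodel $\mathfrak M$ and a determined condition $r$, one produces a finite $\bar z\subseteq\mathfrak M\cap\omega_1$ so that certain reflections $(r^{\mathfrak M})^\wedge(k,z)$ lie in $\mathfrak M$ and any $q\in\mathfrak M$ extending them with controlled $p$-values is compatible with $r$. It is precisely \emph{here}, not in the ccc argument, that the domination clause of Lemma~\ref{jsdbtgy} is used: the function $h$ and the resulting finite $\Delta$ control $p$-values between $A_q\setminus A_{\bar r}$ and $A_r\setminus A_{\bar r}$ so that no new collisions in the $w$-component can occur. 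You have misallocated this property, invoking it to get ccc/precaliber~$\aleph_1$, whereas in fact injective plus almost-disjoint fibers already suffice for Property~$K$; the domination clause is needed only for Luzin preservation.

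On the Souslin side your plan is workable but overcomplicated: Property~$K$ of each iterand (hence of the finite-support iteration) immediately preserves any Souslin tree, so no separate $T$-preservation analysis is needed.
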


The model of Theorem~\ref{nonmkjbs64hb} will be the outcome of a finite support iteration of  posets
$\mathbb Q(p,c)$ of the following form.

\begin{defn}\label{def52} $\mathbb Q(p,c)$ consists of all triples $q=(a_q,f_q,w_q)$ satisfying all of the following:
\begin{enumerate}
\item $a_q\in[\omega_1]^{<\aleph_0}$;
\item $f_q:a_q\rightarrow\omega$ is a function;
\item $w_q$ is a function from a finite subset of $\omega\times\omega$ to $\omega$;
\item for all $(\alpha,\beta)\in[a_q]^2$, if $f_q(\alpha)=f_q(\beta)$, then  
$(f_q(\alpha),p(\alpha,\beta))\in\dom(w_q)$ and $c(\alpha,\beta)=w_q(f_q(\alpha),p(\alpha,\beta))$.
\end{enumerate}
\end{defn}

For a generic $G\subseteq \mathbb Q(p,c)$, let $X_{i,G} = \{\alpha<\omega_1\mid \exists q\in G\,(f_q(\alpha)=i)\}$.
It is not hard to see that for every partition $p:[\omega_1]^2\rightarrow\omega$ with injective and almost-disjoint fibers,
$\mathbb Q(p,c)$ has Property~$K$,\footnote{In fact, by a result from \cite[\S3]{paper49}, $\mathbb Q(p,c)$ satisfies the stationary-cc.} and 
for all $i,j<\omega$,
$$\one\forces{\mathbb Q(p,c)}{|\{c(\alpha,\beta)\mid (\alpha,\beta)\in [X_{i,\dot{G}}]^2 \text{ and } p(\alpha,\beta) = j\}| \le1} .$$

\begin{defn}\label{Newkjd67}
For all $q\in \mathbb Q(p,c)$, $k<\omega$ and  $z\in[\omega_1]^{<\aleph_0}$, define $q^\wedge (k, z)$ to be the triple $(a,f,w)$ satisfying:
\begin{itemize}
\item $a:=a_q\cup z$;
\item $f:a\rightarrow\omega$ is a function extending $f_q$
and satisfying $f(\alpha)=k+\otp(z\cap\alpha)$ for all $\alpha\in a\setminus a_q$;
\item $w_q:=w$.
\end{itemize}
\end{defn}

Note that $q^\wedge (k, z)$ may not be in $\mathbb Q(p,c)$,
but it will be, provided that $k\supseteq\im(f_q)$.

\begin{cor}\label{Akjh} For every $\beta<\omega_1$, $D_\beta:=\{ q\in\mathbb Q(p,c)\mid \beta\in a_q\}$ is dense,
so that  $$\one\forces{\mathbb Q(p,c)}{\biguplus_{i<\omega} X_{i,\dot{G}}=\omega_1} .$$
\end{cor}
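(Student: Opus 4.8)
The plan is to verify the two assertions of Corollary~\ref{Akjh} in turn: first the density of each $D_\beta$, and then the consequence that the generic filter covers $\omega_1$ via the sets $X_{i,\dot G}$.

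First I would prove density of $D_\beta$. Fix $\beta<\omega_1$ and an arbitrary condition $q\in\mathbb Q(p,c)$; I may assume $\beta\notin a_q$, else $q\in D_\beta$ already. The idea is to extend $q$ by adjoining $\beta$ to its domain while assigning $f$-value a fresh colour never used before. Concretely, pick $k<\omega$ with $k\ge\sup(\im(f_q))+1$ (so that $k\notin\im(f_q)$) and consider the triple $r:=q^\wedge(k,\{\beta\})$ from Definition~\ref{Newkjd67}. Then $a_r=a_q\cup\{\beta\}$, $f_r$ extends $f_q$ with $f_r(\beta)=k+\otp(\{\beta\}\cap\beta)=k$, and $w_r=w_q$. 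By the remark following Definition~\ref{Newkjd67}, since $k\supseteq\im(f_q)$ the triple $r$ lies in $\mathbb Q(p,c)$; one checks clause~(4) of Definition~\ref{def52} holds because the only new pairs in $[a_r]^2$ involve $\beta$, and for such a pair $(\alpha,\beta)$ with $\alpha\in a_q$ we have $f_r(\alpha)=f_q(\alpha)<k=f_r(\beta)$, so the hypothesis $f_r(\alpha)=f_r(\beta)$ of clause~(4) is vacuous. Hence $r\le q$ and $r\in D_\beta$, establishing density.

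Next I would derive the forcing statement. Work in $V[\dot G]$ with $\dot G$ the canonical name for the generic filter. For each $\beta<\omega_1$, genericity together with the density of $D_\beta$ gives some $q\in\dot G$ with $\beta\in a_q$; setting $i:=f_q(\beta)$, the defining formula for $X_{i,\dot G}$ is witnessed by this very $q$, so $\beta\in X_{i,\dot G}$. Thus $\bigcup_{i<\omega}X_{i,\dot G}=\omega_1$. For disjointness (so that the union is genuinely $\biguplus$), suppose $\beta\in X_{i,\dot G}\cap X_{j,\dot G}$ as witnessed by conditions $q,q'\in\dot G$ with $f_q(\beta)=i$, $f_{q'}(\beta)=j$; since $\dot G$ is a filter there is a common extension $r\le q,q'$ in $\dot G$, and then $f_r$ extends both $f_q$ and $f_{q'}$, forcing $i=f_r(\beta)=j$. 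Since all of this is forced by $\one$, we conclude $\one\forces{\mathbb Q(p,c)}{\biguplus_{i<\omega}X_{i,\dot G}=\omega_1}$.

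The argument is routine given the machinery already set up; the only point requiring a moment's care is checking that the extension $r=q^\wedge(k,\{\beta\})$ really satisfies clause~(4) of Definition~\ref{def52}, but as noted this is immediate because the fresh colour $k$ is distinct from every $f_q$-value, so no new monochromatic pair is created and no new constraint on $w_r=w_q$ arises. In particular one does not need to invoke the partition properties of $p$ at all for this corollary --- those are used elsewhere (for Property~$K$ and the homogeneity statement), not here. I would expect no genuine obstacle.
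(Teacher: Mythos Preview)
Your argument is correct and follows the same approach as the paper: extend a given condition $q$ by $q^\wedge(k,\{\beta\})$ for a sufficiently large $k$. The paper's proof is a single sentence to this effect; you have simply unpacked the verification of clause~(4) of Definition~\ref{def52} and spelled out the routine derivation of the forcing statement (including disjointness), all of which the paper leaves implicit.
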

\begin{proof}
Given arbitrary $q\in\mathbb Q(p,c)$  and $\beta<\omega_1$,
for all sufficiently large $k$,
$q^\wedge (k, \{\beta\})$ is a condition  in $D_\beta$, extending $q$.
\end{proof}

\begin{cor} $\one\forces{\mathbb Q(p,c)}{c\text{ is }p\text{-special}}$.\qed
\end{cor}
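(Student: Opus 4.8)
The plan is to read the required decomposition straight off the generic object, using the three facts already recorded above. Fix a $V$-generic filter $G\s\mathbb Q(p,c)$ and work in $V[G]$; for each $i<\omega$ set $X_i:=\{\alpha<\omega_1\mid\exists q\in G\,(f_q(\alpha)=i)\}$, i.e., the realization of the name $X_{i,\dot G}$. I claim that $\omega_1=\biguplus_{i<\omega}X_i$ is a $(p,c)$-special decomposition.

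First I would verify that $\langle X_i\mid i<\omega\rangle$ partitions $\omega_1$. Coverage, $\bigcup_{i<\omega}X_i=\omega_1$, is precisely Corollary~\ref{Akjh}. Pairwise disjointness uses only that each $f_q$ is a function together with directedness of $G$: if some $\alpha$ lay in $X_i\cap X_{i'}$ with $i\neq i'$, witnessed by $q,q'\in G$, then any common extension $q''\in G$ would have $f_{q''}\supseteq f_q\cup f_{q'}$, forcing $i=f_{q''}(\alpha)=i'$.

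Next comes the homogeneity of the fibers. Fix $i,j<\omega$ and consider $E_{i,j}:=\{(\alpha,\beta)\in[X_i]^2\mid p(\alpha,\beta)=j\}$. If $E_{i,j}=\emptyset$ there is nothing to check; otherwise, for $(\alpha,\beta)\in E_{i,j}$ one picks, by directedness, a single $q\in G$ with $\alpha,\beta\in a_q$ and $f_q(\alpha)=f_q(\beta)=i$, and clause~(4) of Definition~\ref{def52} then gives $(i,j)\in\dom(w_q)$ and $c(\alpha,\beta)=w_q(i,j)$. Since the $w$-components of conditions in $G$ cohere into a single function $w_G=\bigcup_{q\in G}w_q$, this value equals $w_G(i,j)$ independently of the chosen pair. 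This is exactly the forced cardinality bound $\one\forces{\mathbb Q(p,c)}{|\{c(\alpha,\beta)\mid(\alpha,\beta)\in[X_{i,\dot G}]^2\text{ and }p(\alpha,\beta)=j\}|\le1}$ recorded just after Definition~\ref{def52}, so $c\restriction E_{i,j}$ is constant.

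Combining the two items, $\omega_1=\biguplus_{i<\omega}X_i$ witnesses that $c$ is $p$-special in $V[G]$; as $G$ was arbitrary, the corollary follows. There is no genuine obstacle here: everything is assembled from Corollary~\ref{Akjh}, the functionality of the conditions, and the forced bound displayed after Definition~\ref{def52}. The only point meriting a word of care is that those facts are stated for the names $X_{i,\dot G}$, so one should first pass to a fixed generic $G$ before reading them as assertions about genuine subsets of $\omega_1$.
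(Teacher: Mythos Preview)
Your proposal is correct and follows exactly the route the paper has in mind: the corollary is marked with \qed\ because it is immediate from Corollary~\ref{Akjh} (giving the partition $\omega_1=\biguplus_i X_i$) together with the displayed bound after Definition~\ref{def52} (giving $(p,c)$-homogeneity of each $X_i$). You have simply unpacked these two ingredients carefully, including the easy directedness argument for disjointness of the $X_i$'s.
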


\begin{defn}\label{Pnvafsh} Let $p:[\omega_1]^2\to \omega$ be a partition. 
For any ordinal $\eta$, a finite-support iteration $\{\mathbb Q_\xi\}_{\xi\in \eta}$
will be called a \emph{$p$-iteration} iff $\mathbb Q_0$ is the trivial forcing,
and, for each ordinal $\xi$ with $\xi+1<\eta$ there is a
$\mathbb Q_\xi$-name  $\forcingname{{c}_\xi}$
 such that
 \begin{enumerate}
 \item $\one\forces{ \mathbb Q_\xi}{\forcingname{{c}_\xi}: [\omega_1]^2\to \omega\text{ is a coloring}}$,
 \item $ \mathbb Q_{\xi+1} = \mathbb Q_\xi * \mathbb Q(p,\forcingname{{c}_\xi})$.
 \end{enumerate}
\end{defn}

\begin{conv} If $\{\mathbb Q_\xi\}_{\xi\in \eta}$ is a $p$-iteration,
with $\eta>0$ a limit ordinal, then we denote its direct limit by $\mathbb Q_\eta$.
\end{conv}

From now on, we fix a $p$-iteration $\{{{\mathbb Q}}_\xi\}_{\xi\in \eta}$
for some partition $p:[\omega_1]^2\rightarrow\omega$ with injective and almost-disjoint fibers,
hence each of the iterands has Property~$K$,
and so does the whole iteration.

\begin{defn} A structure $\mathfrak M$ is said to be \emph{good} for the $p$-iteration $\{\mathbb Q_\xi\}_{\xi\in \eta}$
iff there is a large enough regular  cardinal $\kappa>\eta$ such that
all of the following hold:
\begin{itemize}
\item $\mathfrak M$ is a countable elementary submodel of $(\mathcal H_\kappa,{\in},\lhd_\kappa)$, where $\lhd_\kappa$ is a well-ordering of $\mathcal H_\kappa$;
\item $p,\{{{\mathbb Q}}_\xi\}_{\xi\in \eta}$ and $\{\forcingname{{c}}_\xi\mid \xi+1<\eta\}$ are in $\mathfrak M$.
\end{itemize}
\end{defn}

\begin{defn}\label{determined}
Define   $q\in \mathbb Q_\xi$ to be \emph{determined}  by recursion on $\xi\in\eta$:

$\br$ For $\xi = 0$, all the conditions are  determined. 

$\br$ For any $\xi$, a condition $q\in \mathbb Q_{\xi + 1}$ is  determined if:
\begin{enumerate}
\item $q\restriction \xi$ is  determined;
\item $q\restriction \xi\forces{ \mathbb Q_\xi}{q(\xi) =  (a_{q,\xi},f_{q,\xi},w_{q,\xi})}$ 
for an actual triple of finite sets;
\item for all $(\alpha, \beta)\in [a_{q,\xi}]^2$ there is $n<\omega$ such that $q\restriction \xi\forces{ \mathbb Q_\xi}{\dot{c}_\xi(\alpha,\beta) = {n}}$.
\end{enumerate}

$\br$ For any $\xi\in\acc(\eta)$, $q\in\mathbb Q_\xi$ is determined if $q\restriction\zeta$ is determined for all $\zeta<\xi$.
\end{defn}

By a standard argument, the  determined conditions are dense in $\mathbb Q_\eta$.

\begin{defn} \label{Catiyzblall8}
For a determined condition $q$ in the $p$-iteration, we say that $k$ is \emph{sufficiently large for $q$} iff
$k\supseteq\im( f_{q,\xi})$ for all $\xi$ in the support of $q$. 
\end{defn}
\begin{defn}
For a condition $q$ in the $p$-iteration,
$k<\omega$ and  $z\in[\omega_1]^{<\aleph_0}$,
define $q^\wedge (k,z)$ by letting $q^\wedge (k, z)(\xi) := q(\xi)^\wedge(k, z)$ for each $\xi$ in the support of $q$. 
\end{defn}

Note that if $q$ is determined and $k$ is sufficiently large for $q$, then for each $\xi$ in the support of $q$,
$q\restriction \xi\forces{\mathbb Q_\xi}{q^\wedge (k,z)\in {\mathbb Q}(p,\forcingname{{c}_\xi})}$. In effect, if $k$ is sufficiently large for $q$, then $q^\wedge(k, z)$ is a legitimate condition.

\begin{defn}\label{DejgkjsacRhjM}
For any structure $\mathfrak M$ good for the $p$-iteration $\{\mathbb Q_\xi\}_{\xi\in \eta}$,
for all $\xi\in\eta$ and a determined condition $q\in \mathbb Q_{\xi}$,
we define $q^\mathfrak{M}$, as follows.
The definition is by recursion on $\xi\in\eta$:

$\br$ For $\xi=0$ there is nothing to do.

$\br$ For any $\xi$ such that $q^\mathfrak{M}$ has been defined for all determined $q$ in $\mathbb Q_\xi$,
given a determined condition $q\in \mathbb Q_{\xi +1}$, we consider two cases:

$\br\br$ If $\xi\in\mathfrak M$, then let  $q^\mathfrak{M} := (q\restriction \xi)^\mathfrak{M} * (a_{q,\xi}\cap \mathfrak M,f_{q,\xi}\cap \mathfrak M,w_{q,\xi})$;

$\br\br$ Otherwise, just let $q^\mathfrak{M} := (q\restriction \xi)^\mathfrak{M}*(\emptyset,\emptyset,\emptyset)$.

$\br$ For any $\xi\in\acc(\eta)$, since this is a finite-support iteration, there is nothing new to define.
\end{defn}

If $q$ is determined, then, for every coordinate $\xi$ in the support of $q$,
$q^\mathfrak{M}(\xi)$ is a triple consisting of finite sets lying in $\mathfrak M$.
It is important to note that  $q^\mathfrak{M}$ may not, in general,  be a condition in $\mathbb Q_{\xi }$,
because the last clause of Definition~\ref{def52} may fail. 
Nevertheless,  $(q^\mathfrak{M})^\wedge (k,z) $ is a well-defined object, since its definition does not depend on  the $\dot{c}_\xi$'s. 

\begin{notation} 
For any determined condition $q\in \mathbb Q_\xi$, we denote by $A_q$ the union of $a_{q,\xi}$ over all $\xi$ in the support of $q$. 
\end{notation}

We now arrive at the main technical lemma of this section.

\begin{lemma}\label{Oawewlkn11}
Suppose  $p:[\omega_1]^2\to \omega$ satisfies the conclusion of Lemma~\ref{jsdbtgy}, and 
$\mathfrak M$ is a structure which is good for the $p$-iteration $\{\mathbb Q_\xi\}_{\xi\in \eta}$.

For all $\zeta\le\sup(\eta)$ and a determined condition $r\in \mathbb Q_\zeta$, 
there is a finite set $\bar{z}\subseteq \mathfrak M\cap\omega_1$ such that:
\begin{itemize}
\item[{\bf A:}] For every $z\in[\mathfrak M\cap\omega_1]^{<\aleph_0}$ covering $\bar z$ and every integer $k$ that is sufficiently large for $r$,
$(r^\mathfrak{M})^\wedge(k, z)$ is in $\mathfrak M\cap \mathbb Q_\zeta$ and is determined;
\item[{\bf B:}] For every $z\in[\mathfrak M\cap\omega_1]^{<\aleph_0}$ covering $ \bar{z}$
and every integer $k$ that is sufficiently large for $r$,
for the condition
$\bar r := (r^\mathfrak{M})^\wedge (k, z)$ and a condition $q\in \mathfrak M\cap\mathbb Q_\zeta$, 
if the following three requirements hold:
\begin{enumerate}
\item \label{Jjkbjkdb1} $\mathfrak M\models q\leq\bar r$ and $q$ is  determined;
\item \label{Jjkbjkdb3} the mapping $(\alpha,\beta) \mapsto p(\alpha,\beta)$  is injective over $(A_{q}\setminus A_{\bar r}) \times (A_{r}\setminus A_{\bar r})$;
 \item \label{Jjkbjkdb4} $p(\alpha,\beta)>p(\alpha',\beta')$ for all $(\alpha,\beta)\in (A_{q}\setminus A_{\bar r}) \times (A_{r}\setminus A_{\bar r})$ and $(\alpha',\beta')\in [A_{r}]^2\cup[A_{q}]^2$,
\end{enumerate}
then $q\not\perp r$.
\end{itemize}
\end{lemma}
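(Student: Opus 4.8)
The plan is to prove the lemma by induction on $\zeta\le\sup(\eta)$, following the three cases of the definition of the iteration (trivial stage, successor, limit). Since the support of $r$ is finite, the only genuinely new work happens at successor stages $\zeta=\xi+1$ where $\xi$ lies in the support of $r$; at all other coordinates we inherit the set $\bar z$ from the induction hypothesis and merely check that nothing breaks. The role of the set $\bar z$ is to absorb the finitely many ``bad'' ordinals below $\sup(A_r)$ that the partition $p$ forces us to avoid — this is precisely where the conclusion of Lemma~\ref{jsdbtgy} gets used: applying it to the function $h$ that records, for each $\alpha\in A_r\cap\mathfrak M$, a bound large enough that $p(\alpha,\beta)$ will exceed all $p$-values occurring inside $[A_r]^2$ and $[A_q]^2$, we obtain the threshold $\gamma$ and, for each candidate $b$, the finite exceptional set $\Delta$; we then set $\bar z$ to contain $\Delta$ together with $A_r\cap\gamma$ and a few bookkeeping points so that clauses \eqref{Jjkbjkdb3} and \eqref{Jjkbjkdb4} are automatically inherited by any $q$ extending $\bar r=(r^{\mathfrak M})^\wedge(k,z)$ in $\mathfrak M$.

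**Part A.** For clause~\textbf{A}, note that $(r^{\mathfrak M})^\wedge(k,z)$ is built coordinatewise, and on each coordinate $\xi$ in the support of $r$ with $\xi\in\mathfrak M$ it is the triple $(a_{r,\xi}\cap\mathfrak M\cup z,\, f,\, w_{r,\xi})$ for the obvious $f$. Since $a_{r,\xi}\cap\mathfrak M$, $z$, $w_{r,\xi}$ and $k$ all lie in $\mathfrak M$, the whole object lies in $\mathfrak M$ by elementarity. That it is a genuine condition of $\mathbb Q(p,\dot c_\xi)$ requires clause~(4) of Definition~\ref{def52}: whenever $f(\alpha)=f(\beta)$ with both in the new set $z$, we need $k$ sufficiently large (which forces $\otp(z\cap\alpha)\ne\otp(z\cap\beta)$ to be the only way equality of $f$-values can happen among old-vs-new or new-vs-new pairs to be excluded) — so in fact for $k$ sufficiently large for $r$ the only $f$-collisions are among old elements of $a_{r,\xi}\cap\mathfrak M$, where clause~(4) holds because $r$ was a condition; here we use that $p$ has injective fibers to rule out accidental collisions and that $k\supseteq\im(f_{r,\xi})$. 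Determinacy of $(r^{\mathfrak M})^\wedge(k,z)$ is immediate since its triples are actual finite sets and the $\dot c_\xi$-values needed are exactly a subset of those already decided by $r$.

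**Part B — the core.** Given $q\in\mathfrak M$ with $q\le\bar r$, determined, satisfying \eqref{Jjkbjkdb3} and \eqref{Jjkbjkdb4}, I must show $q\not\perp r$, i.e.\ produce a common extension. The natural candidate is the coordinatewise amalgamation $q\cup r$: on each coordinate $\xi$ take $a:=a_{q,\xi}\cup a_{r,\xi}$, $f:=f_{q,\xi}\cup f_{r,\xi}$ (these agree on the overlap, which is inside $\mathfrak M$ and inside $\bar r$), and $w:=w_{q,\xi}\cup w_{r,\xi}$ — the last being a function because, by \eqref{Jjkbjkdb4}, the $p$-values arising from the genuinely new cross pairs $(\alpha,\beta)\in(A_q\setminus A_{\bar r})\times(A_r\setminus A_{\bar r})$ are strictly larger than every $p$-value appearing in the domains of $w_{q,\xi}$ and $w_{r,\xi}$, so no clash of domains occurs, and by \eqref{Jjkbjkdb3} together with injectivity of $p\restriction((\epsilon\setminus\Delta)\times b)$ from Lemma~\ref{jsdbtgy} the new pairs themselves get pairwise distinct $p$-values; then I define $w$ on the new pairs by reading off the (determined, hence actual) value of $\dot c_\xi(\alpha,\beta)$. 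I then have to verify clause~(4) of Definition~\ref{def52} for this amalgamated condition at every coordinate: if $f(\alpha)=f(\beta)$ for a cross pair, I use $h(\alpha)<p(\alpha,\beta)$ from Lemma~\ref{jsdbtgy} to guarantee $(f(\alpha),p(\alpha,\beta))$ is a \emph{fresh} entry of $\dom(w)$ and set $w$ there equal to the decided colour; the remaining $f$-collisions are internal to $q$ or to $r$ and were already handled. Running this through the induction on $\xi$ (reading off decided colours one coordinate at a time, using that $q\restriction\xi$ forces $q(\xi)$ to an actual triple and forces $\dot c_\xi$'s relevant values) produces a condition below both $q$ and $r$.

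**Main obstacle.** The delicate point is the interaction between the model $\mathfrak M$ and the partition: $q$ lives inside $\mathfrak M$ but its new elements $A_q\setminus A_{\bar r}$ may be \emph{above} $\mathfrak M\cap\omega_1$, while $r$'s new elements $A_r\setminus A_{\bar r}$ are (in the application) taken even higher. Clauses \eqref{Jjkbjkdb3} and \eqref{Jjkbjkdb4} are exactly the hypotheses that let the combinatorics of Lemma~\ref{jsdbtgy} control the cross pairs, but I must make sure that the single finite set $\bar z$, chosen \emph{before} seeing $q$, really does force \eqref{Jjkbjkdb3}–\eqref{Jjkbjkdb4} to be inheritable, and that it works simultaneously for every sufficiently large $k$ and every $z\supseteq\bar z$ in $\mathfrak M$. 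Concretely: from Lemma~\ref{jsdbtgy}, applied to $h$ bounding the $p$-values in $[A_r]^2$, one gets $\gamma$; then for $q$ ranging over $\mathfrak M$, the set $A_q$ is an element of $\mathfrak M\cap[\omega_1]^{<\omega}$ and hence $A_q\subseteq\mathfrak M\cap\omega_1$, so one applies the ``for every $b\in[\omega_1\setminus\gamma]^{<\omega}$'' clause with $b$ a finite subset of $A_r\setminus A_{\bar r}$ — but the exceptional set $\Delta$ depends on $b$, i.e.\ on $r$, which is fine since $r$ is fixed. The care needed is to put $A_r\cap\gamma$ and this $\Delta$ (intersected appropriately with $\epsilon=\sup(A_r\cap\mathfrak M)$ or the relevant countable ordinal) into $\bar z$, and to verify that once $z\supseteq\bar z$, any new element of $A_q$ lies outside $\Delta$, so that $p\restriction((\epsilon\setminus\Delta)\times b)$ injective delivers \eqref{Jjkbjkdb3} and $h(\alpha)<p(\alpha,\beta)$ delivers \eqref{Jjkbjkdb4}. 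Getting this dependency bookkeeping exactly right — which finite sets go into $\bar z$, and checking the two clauses of the Claim in Lemma~\ref{jsdbtgy} translate into the two clauses here — is the heart of the argument; the amalgamation itself is then routine.
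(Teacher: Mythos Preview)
Your inductive scheme and your treatment of \textbf{B} (coordinatewise amalgamation, with hypotheses (2)--(3) ruling out conflicts on cross pairs) are essentially what the paper does. But there is a real gap in your handling of \textbf{A}, and it is precisely the heart of the lemma.

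You write that $(r^{\mathfrak M})^\wedge(k,z)$ is a condition because ``the $\dot c_\xi$-values needed are exactly a subset of those already decided by $r$''. This does not follow. Determinacy of $r$ means $r\restriction\xi$ forces $\dot c_\xi(\alpha,\beta)=w_{r,\xi}(i,j)$ for the relevant pairs in $[a_{r,\xi}\cap\mathfrak M]^2$; but for $(r^{\mathfrak M})^\wedge(k,z)$ to belong to $\mathbb Q_{\xi+1}$ one needs the \emph{weaker} condition $((r\restriction\xi)^{\mathfrak M})^\wedge(k,z)$ to force the same equality, and there is no a priori reason the $\mathfrak M$-projection should decide $\dot c_\xi(\alpha,\beta)$ the same way $r\restriction\xi$ does. (The paper flags exactly this just before the lemma: $r^{\mathfrak M}$ ``may not, in general, be a condition''.) The paper's proof of \textbf{A} is therefore by contradiction: assuming \textbf{A} fails for arbitrarily large $z\supseteq\bar z$, one builds inside $\mathfrak M$ a sequence $(z_n,q_n,i_n,j_n,\alpha_n,\beta_n)$ with $q_n\le((r\restriction\xi)^{\mathfrak M})^\wedge(k,z_n)$ forcing $\dot c_\xi(\alpha_n,\beta_n)\neq w_{r,\xi}(i_n,j_n)$ and $z_{n+1}\supsetneq A_{q_n}$; one then defines $h$ on $\epsilon=\sup\bigcup_n A_{q_n}+1$, applies Lemma~\ref{jsdbtgy} with $b=A_r\setminus\mathfrak M$ to get $\Delta$, chooses $n$ with $A_{q_{n+1}}\setminus A_{q_n}$ disjoint from $\Delta$, and observes that $q_{n+1}$ and $r\restriction\xi$ now satisfy hypotheses (1)--(3) of \textbf{B}. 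The \emph{induction hypothesis for \textbf{B}} then gives $q_{n+1}\not\perp r\restriction\xi$, and a common extension forces contradictory values of $\dot c_\xi(\alpha_{n+1},\beta_{n+1})$.

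This also means your ``Main obstacle'' paragraph has the dependencies backwards. Clauses (2)--(3) are \emph{hypotheses} of \textbf{B}, not something $\bar z$ is engineered to guarantee; the set $\bar z$ at stage $\xi+1$ is simply the one inherited from $r\restriction\xi$, possibly enlarged to some $z^*$ during the contradiction argument for \textbf{A}. Lemma~\ref{jsdbtgy} is invoked inside that contradiction argument (and again, separately, by whoever \emph{applies} the present lemma, as in Lemma~\ref{LuzinPres}, to verify (2)--(3)), not in the initial choice of $\bar z$. Finally, a small point in your \textbf{B}: for a cross pair $(\alpha,\beta)\in(A_q\setminus A_{\bar r})\times(A_r\setminus A_{\bar r})$ neither $q$ nor $r$ decides $\dot c_\xi(\alpha,\beta)$, so you cannot ``read off'' its value; what (2)--(3) give is that such pairs have pairwise distinct, fresh $p$-values, so no consistency constraint arises and $w$ extends freely to some $w^*$ once a common extension $q^*$ of $q\restriction\xi$ and $r\restriction\xi$ has been obtained from the induction hypothesis.
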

\begin{proof} Proceed by induction on $\zeta\le\sup(\eta)$ proving {\bf A} and {\bf B} simultaneously.
The case $\zeta = 0$ is immediate.
The case $\zeta=1$ is simple as well, but it may be instructive to consider it in detail.
So $c_0$ is a coloring in the ground model and all conditions are determined.
In effect, given $r\in\mathbb Q_1$, $r^{\mathfrak M}$ is a condition, as well.
It will be shown that $\bar{z} = \emptyset$ satisfies the conclusion.

Let $k$ be sufficiently large for $r$.
We know that $(r^{\mathfrak M})^\wedge (k, z)\in \mathfrak M\cap \mathbb Q_1$ for any $z\in [\mathfrak M\cap\omega_1]^{<\aleph_0}$.
Hence
{\bf A} is immediate.
To see that {\bf B} holds,
suppose that we are given  $z\in[\mathfrak M\cap\omega_1]^{<\aleph_0}$,
we let $\bar r := (r^\mathfrak{M})^\wedge (k, z)$,
and we are also given a condition $q\in \mathfrak M\cap\mathbb Q_1$ satisfying requirements (1)--(3) above.

To see that $q\not\perp r$, let $a:=a_{q,0}\cup a_{r,0}$,
$f := f_{q,0}\cup f_{r,0}$ and $w := w_{q,0}\cup w_{r,0}$.
It is immediate to see that $f$ and $w$ are functions, 
$A_r=a_{r,0}$, $A_q=a_{q,0}$ and $A_q\cap A_r=A_{\bar r}$.
We need to show that there exists a function $w^*$ extending $w$ for which $(a,f,w^*)$ is a legitimate condition.
For this, suppose that we are given $i,j<\omega$, $(\alpha,\beta), (\alpha',\beta')\in [a]^2$,
with $f(\alpha) = f(\beta) = i = f(\alpha') = f(\beta')$ and
$p(\alpha,\beta) = j = p(\alpha',\beta')$.
It must be shown that $c_0(\alpha,\beta) = c_0(\alpha',\beta')$.
There are two cases to consider:
\begin{description}
\item[Case I] If $(\alpha,\beta),(\alpha',\beta')\in [A_q]^2\cup [A_r]^2$, then since $w$ extends $w_{q,0}$ and $w_{r,0}$,
$c_0(\alpha,\beta) = w(i,j)=c_0(\alpha',\beta')$.
\item[Case II]  If $(\alpha,\beta)\in [a]^2\setminus([A_q]^2\cup [A_r]^2)$, then since $A_q\cap A_r=A_{\bar r}$ and $\alpha<\beta$,
we infer that $(\alpha,\beta)\in (A_{q}\setminus A_{{\bar r}}) \times (A_{r}\setminus A_{{\bar r}})$. So, by Clause~(3),
$(\alpha',\beta')\in [a]^2\setminus([A_q]^2\cup [A_r]^2)$, as well.
Then, likewise $(\alpha',\beta')\in (A_{q}\setminus A_{{\bar r}}) \times (A_{r}\setminus A_{{\bar r}})$. 
Altogether, by Clause~(2), $(\alpha,\beta)=(\alpha',\beta')$. In particular, 
$c_0(\alpha,\beta) = c_0(\alpha',\beta')$.
\end{description}

Next, assume that $\zeta\le\sup(\eta)$ and that {\bf A} and {\bf B} have been established for all $\xi<\zeta$.
If $\zeta$ is a limit, then the finite-support nature of the iteration
also establishes both {\bf A} and {\bf B} hold, so suppose that
$\zeta = \xi+1$.
The successor case in which
$\xi \notin \mathfrak M$ also follows directly from the induction
hypothesis by the definition of $q^\mathfrak{M}$,
so assume that $\xi \in \mathfrak M$. 

Let $r \in \mathbb Q_{{\zeta}}$ be  determined.
Let $\bar z\in[\mathfrak M\cap\omega_1]^{<\aleph_0}$ be given by the induction hypothesis with respect to $r\restriction \xi$.
In particular, 
for every $z\in[\mathfrak M\cap\omega_1]^{<\aleph_0}$ covering $\bar z$, and $k$ sufficiently large for
$r\restriction \xi$, 
$((r\restriction \xi)^\mathfrak{M})^\wedge (k, z)$ is in $\mathfrak{M}\cap \mathbb Q_{\xi}$ and is  determined.

To establish {\bf A},
note that, since $\xi\in \mathfrak M$, it follows that for any $z\in[\mathfrak M\cap\omega_1]^{<\aleph_0}$ covering $\bar z$, and $k$ sufficiently large for $r$ (in particular, sufficiently large for $r\restriction \xi $),
$s_{k,z}:=((r\restriction \xi)^\mathfrak{M})^\wedge (k,z)*(a_{r,\xi}\cap \mathfrak M,f_{r,\xi}\cap \mathfrak M,w_{r,\xi})$
is in $\mathfrak M$. 
It must also be shown that $s_{k,z}$ belongs to $\mathbb Q_{\zeta}$.
For this, it suffices to show that for all $i,j<\omega$,
$$((r\restriction \xi)^\mathfrak{M})^\wedge (k, z)\Vdash_{\mathbb Q_{\xi}}
``\forall (\alpha,\beta)\in [f_{r, \xi}^{-1}[\{i\}]\cap p^{-1}[\{j\}]\cap\mathfrak M]^2~\forcingname{c}_\xi(\alpha,\beta)=w_{r,\xi}(i,j)".$$

As $\forcingname{c}_{\xi}$ belongs to $\mathfrak M$, for each $(\alpha,\beta)\in [a_{r,\xi}\cap \mathfrak M]^2$ there
is a countable, maximal antichain
deciding $\forcingname{c}_{\xi}(\alpha,\beta)$ and
belonging to $\mathfrak M$ because; in other words, all possible decisions about the value of
$\forcingname{c}_{\xi}(\alpha,\beta)$ can be forced without leaving $\mathfrak M$.
So, if the above displayed assertion fails, then there must be some
$q^*\leq ((r\restriction \xi)^\mathfrak{M})^\wedge (k, z)$ in $\mathfrak M$ 
and $(\alpha,\beta)$ in $[f_{r,\xi}^{-1}[\{ i\}]\cap \mathfrak M]^2$ such that
$p(\alpha,\beta) = j$, but
$q^*\forces{\mathbb Q_{\xi}}{\forcingname{c}_{\xi}(\alpha,\beta) \neq w_{r,\xi}(i,j) }$.
Fix $k$ sufficiently large for $r$.
Then, under the assumption that  for any $z\in[\mathfrak M\cap\omega_1]^{<\aleph_0}$ covering $\bar z$,
$s_{k,z}$ does not belong to $\mathbb Q_{\zeta}$,
it is possible to construct recursively a sequence
$\{(z_n,q_{n}, i_n, j_n,(\alpha_n,\beta_n)\}_{n\in\omega}$ such that:
\begin{itemize}
\item $z_0 = \bar{z}$;
\item $q_{n}\leq ((r\restriction \xi)^\mathfrak{M})^\wedge (k, z_n)$ and $q_n$ is determined;
\item $(\alpha_n,\beta_n)\in [f_{r,\xi}^{-1}[\{ i_n\}]\cap p^{-1}[\{j_n\}]\cap \mathfrak M]^2$;
\item $q_{n}\forces{\mathbb Q_{\xi}}{\forcingname{c}_{\xi}(\alpha_n,\beta_n) \neq w_{r,\xi}(i_n,j_n)}$;
\item $z_{n+1}\supsetneq A_{q_n}$.
\end{itemize}
By making canonical choices (e.g., by consulting with $\lhd_\kappa$), this construction can be carried out in $\mathfrak M$.
Let $\epsilon:=\sup(\bigcup_{n\in\omega}A_{q_n})+1$. Define a function $h:\epsilon\to \omega$ via
$$h(\alpha) := \max\{k, p(\alpha',\beta')\mid (\alpha',\beta')\in [A_{q_{n+1}}]^2\text{ and }\alpha \in A_{q_{n+1}}\setminus A_{q_n}\},$$
and note that $h$ is in $\mathfrak M$.

Let $\gamma$ satisfy the conclusion of Lemma~\ref{jsdbtgy} for $h$.
As $h\in\mathfrak M$, $\gamma\in\mathfrak M$, so, since $b:=A_{r}\setminus \mathfrak M$ is an element of $[\omega_1\setminus\gamma]^{<\aleph_0}$,
  there exists $\Delta\in[\epsilon]^{<\aleph_0}$ such that:
  \begin{itemize}
  \item $p\restriction ((\epsilon\setminus\Delta)\times b)$ is injective;
  \item for all $\alpha\in\epsilon\setminus\Delta$  and $\beta\in b$, $h(\alpha)<p(\alpha,\beta)$.
  \end{itemize}

Fix a large enough $n<\omega$ such that $A_{q_{n+1}}\setminus A_{q_n}$ is disjoint from $\Delta$. 
Denote $\bar r:=((r\restriction \xi)^\mathfrak{M})^\wedge (k, z_{n+1})$.
As $z_{n+1}\supseteq A_{q_n}$, $(A_{q_{n+1}}\setminus A_{\bar r})\s(A_{q_{n+1}}\setminus A_{q_n})\s(\epsilon\setminus\Delta)$,
$(A_{r\restriction\xi}\setminus A_{\bar{r}}  )\s b$, and all of the following hold:
\begin{enumerate}
\item $\mathfrak M\models q_{n+1}\leq\bar r$ and $q$ is  determined;
\item the mapping $(\alpha,\beta) \mapsto p(\alpha,\beta)$  is injective over 
$(A_{q_{n+1}}\setminus  A_{\bar r})\times (A_{r\restriction\xi}\setminus A_{\bar{r}}  )$;
 \item $p(\alpha,\beta)>p(\alpha',\beta')$ 
for all  $(\alpha,\beta)\in (A_{q_{n+1}}\setminus  A_{\bar r})\times (A_{r\restriction\xi}\setminus A_{\bar{r}}  )$
and  $(\alpha',\beta')\in [A_{r\restriction\xi}]^2\cup [A_{q_{n+1}}]^2$.
\end{enumerate}

Then applying the induction hypothesis for {\bf B} yields that $q_{n+1}\not\perp (r\restriction \xi)$.
Pick a determined condition $q^*$ in $\mathbb Q_{\xi}$ simultaneously extending $q_{n+1}$ and $(r\restriction \xi)$. As $q^*\le q_{n+1}$, we infer that 
$$q^*\forces{\mathbb Q_{\xi}}{\forcingname{c}_{\xi}(\alpha_{n+1},\beta_{n+1}) \neq w_{r,\xi}(i_{n+1},j_{n+1})  }.$$ 
As $(\alpha_n,\beta_n)\in [f_{r,\xi}^{-1}[\{ i_n\}]\cap \mathfrak M]^2$,
$p(\alpha_n,\beta_n) = j_n$ and $q^*\le r\restriction\xi$,
we infer that
$$q^*\forces{\mathbb Q_{\xi}}{\forcingname{c}_{\xi}(\alpha_{n+1},\beta_{n+1}) =w_{r,\xi}(i_{n+1},j_{n+1})  }.$$ 
This is a contradiction. So {\bf A} does hold.

Next, let us establish {\bf B}.
Recall that we have a determined condition $r \in \mathbb Q_{{\zeta}}$
and $\bar z\in[\mathfrak M\cap\omega_1]^{<\aleph_0}$
satisfying that for every $z\in[\mathfrak M\cap\omega_1]^{<\aleph_0}$ covering $\bar z$, and $k$ sufficiently large for
$r\restriction \xi$, 
$((r\restriction \xi)^\mathfrak{M})^\wedge (k, z)$ is in $\mathfrak{M}\cap \mathbb Q_{\xi}$ and is  determined.
We have just established {\bf A}, proving that we may fix a finite $z^*$ with $\bar z\s z^*\s\mathfrak M\cap\omega_1$,
satisfying that for every $z\in[\mathfrak M\cap\omega_1]^{<\aleph_0}$ covering $z^*$, and every integer $k$ that is sufficiently large for $r$,
$(r^\mathfrak{M})^\wedge(k, z)$ is in $\mathfrak M\cap \mathbb Q_\zeta$ and is determined.

Now, fix arbitrary $z\in[\mathfrak M\cap\omega_1]^{<\aleph_0}$ covering $z^*$, 
an integer $k$ that is sufficiently large for $r$,
and a condition $q\in \mathfrak M\cap\mathbb Q_\zeta$.
Set $\bar r := (r^\mathfrak{M})^\wedge (k, z)$
and suppose that the requirements (1)--(3) of {\bf B} for $\bar r$ and $q$ hold.
In particular, they hold for $\bar r\restriction\xi$ and $q\restriction\xi$. That is:
\begin{itemize}
\item  $\mathfrak M\models q\restriction\xi\leq\bar r\restriction\xi$ and $q\restriction\xi$ is  determined;
\item the mapping $(\alpha,\beta) \mapsto p(\alpha,\beta)$  is injective over $(A_{q\restriction\xi}\setminus A_{\bar r\restriction\xi}) \times (A_{r\restriction\xi}\setminus A_{\bar r\restriction\xi})$;
 \item $p(\alpha,\beta)>p(\alpha',\beta')$ for all $(\alpha,\beta)\in (A_{q\restriction\xi}\setminus A_{\bar r\restriction\xi}) \times (A_{r\restriction\xi}\setminus A_{\bar r\restriction\xi})$ and $(\alpha',\beta')\in [A_{r\restriction\xi}]^2\cup[A_{q\restriction\xi}]^2$.
\end{itemize}

Now, as $z^*\supseteq\bar z$, we get from {\bf B} of the previous stage that $(q\restriction\xi)\not\perp(r\restriction\xi)$.
Pick a determined condition $q^*$ in $\mathbb Q_{\xi}$ simultaneously extending $(q\restriction\xi)$ and $(r\restriction \xi)$. 
Let $a := a_{q,\xi}\cup a_{r,\xi}$ , $f := f_{q,\xi}\cup f_{r,\xi}$ and $w:= w_{q,\xi}\cup w_{r,\xi}$.
It is immediate to see that $f$ and $w$ are functions, 
$A_r\supseteq a_{r,\xi}$, $A_q\supseteq a_{q,\xi}$ and $A_q\cap A_r=A_{\bar r}$.
To see that $q\not\perp r$, it suffices to prove that there exists $w^*\supseteq w$ such that $q^** (a,f,w^*)\in\mathbb Q_{\zeta}$.

For this, suppose that we are given $i,j<\omega$, $(\alpha,\beta), (\alpha',\beta')\in [a]^2$,
with $f(\alpha) = f(\beta) = i = f(\alpha') = f(\beta')$ and
$p(\alpha,\beta) = j = p(\alpha',\beta')$.
It must be shown that 
$q^*\forces{\mathbb Q_{\xi}}{\forcingname{c}_{\xi}(\alpha,\beta)=\forcingname{c}_{\xi}(\alpha',\beta')  }$.
There are two cases to consider:
\begin{description}
\item[Case I] If $(\alpha,\beta),(\alpha',\beta')\in [a_{q,\xi}]^2\cup [a_{r,\xi}]^2$, then since $w$ extends $w_{q,\xi}$ and $w_{r,\xi}$,
the conclusion follows from the fact that $q^*$ extends $q\restriction\xi$ and $r\restriction\xi$.
\item[Case II]  If $(\alpha,\beta)\in [a]^2\setminus([a_{q,\xi}]^2\cup [a_{r,\xi}]^2)$, then,
as seen earlier, requirements (2) and (3) imply that  $(\alpha,\beta)=(\alpha',\beta')$.
\end{description}

So, we are done.
\end{proof}

\begin{lemma}\label{LuzinPres}
Suppose:
\begin{itemize}
\item  $p:[\omega_1]^2\to \omega$ satisfies the conclusion of Lemma~\ref{jsdbtgy};
\item  $L=\{l_\delta\}_{\delta\in\omega_1}$ is a strongly Luzin subset of $2^\omega$;
\item  $\{{{\mathbb Q}}_\xi\}_{\xi\in \eta}$ is a $p$-iteration with $\eta>0$ a limit ordinal.
\end{itemize}
Then $\one\forces{ {{\mathbb Q}_\eta}}{{L} \text{ is strongly Luzin}}$.
\end{lemma}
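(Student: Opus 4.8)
The statement to be proved is a preservation result: a finite-support $p$-iteration $\{\mathbb Q_\xi\}_{\xi\in\eta}$ of length a limit ordinal, built over partitions with injective and almost-disjoint fibers (so each iterand has Property~$K$, as does $\mathbb Q_\eta$), does not destroy the Luzinity of a ground-model Luzin set $L=\{l_\gamma\}_{\gamma\in\omega_1}\subseteq 2^\omega$. Recall that ``$L$ is Luzin'' means that $L$ is uncountable and meets every meager set in a countable set; equivalently, for every nowhere dense set $N$ (coded in the extension), $\{\gamma<\omega_1\mid l_\gamma\in N\}$ is countable. So I would work with a $\mathbb Q_\eta$-name $\dot N$ for a nowhere dense subset of $2^\omega$ and a name $\dot\gamma$, together with a condition forcing $\dot\gamma$ to be an uncountable set of indices $\gamma$ with $l_\gamma\in\dot N$; I aim to derive a contradiction.

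**The combinatorial heart: an uncountable set of conditions to amalgamate.** The standard strategy is to pass to an elementary submodel and use the amalgamation lemma. Fix, for each of $\aleph_1$ many candidate indices $\gamma$, a determined condition $r_\gamma\in\mathbb Q_\eta$ (using density of determined conditions, and that the iteration has the ccc so names for reals are captured on countable pieces) deciding a basic clopen neighborhood $[s_\gamma]$ of $l_\gamma$ that is forced disjoint from $\dot N$ to be small — more precisely, forcing $l_\gamma\in[s_\gamma]\subseteq$ (complement of the closure sense fails, so instead) forcing that $[s_\gamma]$ is a clopen set disjoint from $\dot N$ is impossible since $l_\gamma\in\dot N$; the correct move is the opposite: since $\dot N$ is nowhere dense, each condition forcing $l_\gamma\in\dot N$ can be refined to $r_\gamma$ deciding a rational open set $U_\gamma$ with $l_\gamma\in\overline{U_\gamma}$ and $U_\gamma$ forced disjoint from $\dot N$, arbitrarily close to $l_\gamma$. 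Then I apply a $\Delta$-system argument together with goodness: choose a countable elementary submodel $\mathfrak M$ good for the iteration, with enough of the data (the name $\dot N$, the $r_\gamma$'s as a sequence) inside $\mathfrak M$, and pick one ``outside'' index $\gamma^*$ with $r_{\gamma^*}$ determined and $l_{\gamma^*}\notin\mathfrak M$ but with $l_{\gamma^*}$ approximated by the $l_\gamma$ for $\gamma\in\mathfrak M$ — this is exactly where Luzinity of $L$ is used in the ground model, via the fact that a Luzin set cannot be covered by countably many nowhere dense sets, so there are uncountably many $\gamma$ with $l_\gamma$ in any prescribed open neighborhood. Using Lemma~\ref{jsdbtgy} for $p$ (and the freedom to first enlarge the $A$-sets to get the injectivity and domination requirements~(2),(3) of Lemma~\ref{Oawewlkn11} to hold over the relevant products), I obtain from Lemma~\ref{Oawewlkn11} a condition $\bar r=(r_{\gamma^*}^{\mathfrak M})^\wedge(k,z)$ inside $\mathfrak M$, and then find inside $\mathfrak M$ a determined $q\le\bar r$ which forces $l_{\gamma'}\in\dot N$ for some $\gamma'\in\mathfrak M$ with $l_{\gamma'}$ extremely close to $l_{\gamma^*}$ — close enough that $q$ in fact forces a basic neighborhood of $l_{\gamma^*}$ to meet $\dot N$. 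Lemma~\ref{Oawewlkn11}{\bf B} then gives $q\not\perp r_{\gamma^*}$, and a common extension forces simultaneously $l_{\gamma^*}\in U_{\gamma^*}$ (disjoint from $\dot N$) and a neighborhood of $l_{\gamma^*}$ meeting $\dot N$ — by choosing the neighborhoods nested correctly this is the desired contradiction.

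**Organizing the neighborhoods and where the real work lies.** Concretely I would set it up so that $r_\gamma$ forces: (i) $l_\gamma\in\dot N$; (ii) there is $n_\gamma<\omega$ with $[l_\gamma\restriction n_\gamma]\cap\dot N$ ``witnessed dense below $l_\gamma$'' — since $\dot N$ is nowhere dense this cannot literally be an open neighborhood disjoint from $\dot N$, so the cleaner formulation is to split on whether $l_{\gamma}$ is forced into the \emph{closure} of $\dot N$ and use that $\overline{\dot N}$ is also nowhere dense; replacing $\dot N$ by its closure at the outset, I may assume $\dot N$ is closed nowhere dense, hence its complement is open dense, and $r_\gamma$ forces $l_\gamma\in\dot N$. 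Then the contradiction is produced by finding $q$ forcing $l_{\gamma'}\notin\dot N$ (possible since $\dot N$ nowhere dense and $l_{\gamma'}$ close to $l_{\gamma^*}$: for suitably large $n$ the neighborhood $[l_{\gamma^*}\restriction n]$ is not contained in $\dot N$, and a generic real of $L$ escaping it can be forced) while $q\not\perp r_{\gamma^*}$ forces $l_{\gamma'}\in\dot N$ via closeness to $l_{\gamma^*}$ and continuity of ``$\in$ a closed set'' on a small enough neighborhood. I expect the main obstacle to be the bookkeeping that makes Lemma~\ref{Oawewlkn11}'s hypotheses~(2) and~(3) hold: I need to arrange the witnesses $A_{r_{\gamma^*}}$ and $A_q$ so that $p$ is injective on $(A_q\setminus A_{\bar r})\times(A_{r_{\gamma^*}}\setminus A_{\bar r})$ and $p$-values on that ``new'' product strictly dominate all $p$-values on $[A_{r_{\gamma^*}}]^2\cup[A_q]^2$; this is precisely the scenario Lemma~\ref{jsdbtgy} was designed to supply, applied to the function $h$ recording the large $p$-values coming from $q$ and the model $\mathfrak M$, but threading it through the iteration (choosing $k$ sufficiently large, $z$ covering the prescribed finite set, and $\gamma^*$ outside $\mathfrak M$) requires care. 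Once these geometric constraints are met, the amalgamation and the contradiction are routine, so I would present the neighborhood setup and the invocation of Lemmas~\ref{jsdbtgy} and~\ref{Oawewlkn11} in full, and leave the elementary ccc/name-reflection facts to a sentence each.
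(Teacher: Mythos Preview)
Your outline has the right ingredients (elementary submodel $\mathfrak M$, Lemma~\ref{Oawewlkn11} for amalgamation, Lemma~\ref{jsdbtgy} to verify its hypotheses, closed nowhere dense $\dot N$), but the order in which you deploy them creates a genuine circularity that your sketch does not resolve.

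You propose to fix $\gamma^*\notin\mathfrak M$ first, form $\bar r=(r_{\gamma^*}^{\mathfrak M})^\wedge(k,z)$, and then search inside $\mathfrak M$ for a single $q\le\bar r$ forcing some node $t\notin\dot T$ with $t\subseteq l_{\gamma^*}$. The difficulty is that hypotheses~(2) and~(3) of Lemma~\ref{Oawewlkn11} constrain $p$ on $(A_q\setminus A_{\bar r})\times(A_{r_{\gamma^*}}\setminus A_{\bar r})$, and Lemma~\ref{jsdbtgy} delivers this only for $b=A_{r_{\gamma^*}}\setminus\mathfrak M$ lying above a threshold $\gamma$ that depends on a function $h$ built from $A_q$. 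So $\gamma^*$ must be chosen \emph{after} $q$; but your topological requirement $t\subseteq l_{\gamma^*}$ forces $q$ to be chosen \emph{after} $\gamma^*$. You flag that ``threading it through \ldots\ requires care,'' but a single $q$ cannot break this loop. (Relatedly, your appeal to Luzinity---``uncountably many $\gamma$ with $l_\gamma$ in any prescribed open neighborhood''---is false as stated: a Luzin set need not be dense, so it need not meet an arbitrary basic clopen.)

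The paper dissolves the circularity by building, entirely inside $\mathfrak M$ and \emph{before} any outside $\gamma$ is chosen, an infinite sequence $\{(z_n,q_n,t_n)\}_{n<\omega}$ with $z_{n+1}\supsetneq A_{q_n}$, $q_n\le q^\wedge(k,z_n)$ determined, and $\tau_n\subseteq t_n$ with $q_n\Vdash t_n\notin\dot T$, where $\{\tau_n\}_{n<\omega}$ enumerates $2^{<\omega}$. This accomplishes two things simultaneously. First, the $t_n$'s generate a \emph{dense} $G_\delta$ set $W=\bigcap_j\bigcup_{n\ge j}[t_{n+1}]$, so Luzinity (correctly invoked: $L$ meets every dense $G_\delta$ co-countably) yields an outside $\gamma$ with $l_\gamma\in W$, hence $t_{n+1}\subseteq l_\gamma$ for infinitely many $n$. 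Second, the entire sequence $\{A_{q_n}\}$ is available to define $h$, and Lemma~\ref{jsdbtgy} then supplies the threshold $\gamma^*$ \emph{before} $\gamma$ is selected; for all large $n$ the finite exceptional set $\Delta$ misses $A_{q_{n+1}}\setminus A_{q_n}$, so hypotheses~(2),(3) hold for $q_{n+1}$ versus $r_\gamma$. Picking one such large $n$ with $t_{n+1}\subseteq l_\gamma$ gives the contradiction. The point is that you must commit to a dense family of $q$'s first and let Luzinity pick $\gamma$ afterward, not the reverse.
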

\begin{proof}
Suppose not. Then it can be assumed that there is a $\mathbb Q_\eta$-name $\forcingname{T}$ 
and a positive integer $d$ such that for the set $\mathcal Q_d:=\bigcup_{n<\omega}(2^n)^d$ of the `rationals' of $(2^\omega)^d$:
\begin{itemize}
\item $\one\forces{{{\mathbb Q}_\eta}}{ \forcingname{T}\subseteq \mathcal Q_d\text{ is a closed nowhere dense tree}}$, and
\item $\one\forces{{{\mathbb Q}_\eta}}{[\dot T]\cap L^d\text{ contains an uncountable pairwise disjoint subfamily}}$.
\end{itemize}
It follows that for each $\gamma<\omega_1$, we may fix a determined condition $r_\gamma\in  \mathbb Q_\eta$
and a sequence $\langle \delta^\gamma_i\mid i<d\rangle$ of ordinals in $\omega_1\setminus\gamma$ 
such that $r_\gamma\forces{{{\mathbb Q_\eta}} }{ \vec{l_\gamma}:=\langle l_{\delta^\gamma_i}\mid i<d\rangle\text{ is a branch through }\forcingname{T} }$.
Pick an uncountable $\Gamma\s\omega_1$ along with $k<\omega$ which is sufficiently large for $r_\gamma$ for all $\gamma\in\Gamma$.
By possibly shrinking $\Gamma$ further, we may also assume that $\{ A_{r_\gamma}\mid \gamma\in\Gamma\}$ forms a $\Delta$-system with  root $\rho$,
and that $\langle\{ \delta_i^\gamma\mid i<d\}\mid \gamma\in \Gamma\rangle$ consists of pairwise disjoint sets.

Let $\mathfrak M$ be a structure good for the $p$-iteration $\{\mathbb Q_\xi\}_{\xi\in \eta}$,
with $\rho,\forcingname{T},\mathbb Q_\eta\in \mathfrak M$.

For each $\gamma\in\Gamma$, let $\bar z_\gamma$ be given by Lemma~\ref{Oawewlkn11} with respect to $r_\gamma$.
Fix an uncountable $\Gamma'\subseteq \Gamma$ and
some $\bar{z}\in[\omega_1\cap\mathfrak M]^{<\omega}$ such that $\bar z_\gamma=\bar z$ for all $\gamma\in\Gamma'$.
By possibly shrinking further, we may assume the existence of $q$ such that $(r_\gamma)^{\mathfrak M} = q$ for all $\gamma\in\Gamma'$.
In particular, for every $z\in[\mathfrak M\cap\omega_1]^{<\aleph_0}$ covering $\bar{z}$,
$q^\wedge (k, z)\in \mathfrak M\cap \mathbb Q_\eta$ is determined.
Let $\{\tau_n\}_{n\in\omega}$ enumerate the set $\mathcal Q_d$.
Construct recursively a sequence $\{ (z_n,{q}_n, t_n)\}_{n\in\omega}$ such that:
\begin{itemize}
\item $z_0 = \bar{z}\cup\rho$;
\item $q_{n}\leq q^\wedge (k, z_n)$ and $q_n$ is a determined condition lying in $\mathfrak M$;
\item\label{lakc9u9}  $\tau_n\s  t_n \in \mathcal Q_d$ with $q_n\forces{\mathbb Q_\eta}{t_n\notin \dot{T}}$;
\item $z_{n+1}\supsetneq A_{q_n}$.
\end{itemize}

Let $\epsilon:=\sup(\bigcup_{n\in\omega}A_{q_n})+1$. Define a function $h:\epsilon\to \omega$ via
$$h(\alpha) := \max\{k, p(\alpha',\beta')\mid (\alpha',\beta')\in [A_{q_{n+1}}]^2\text{ and }\alpha \in A_{q_{n+1}}\setminus A_{q_n}\}.$$

Recalling that $p$ was given by Lemma~\ref{jsdbtgy},
we now fix $\gamma^*<\omega_1$ satisfying that for every $b\in [\omega_1\setminus\gamma^*]^{<\aleph_0}$,
  there exists $\Delta\in[\epsilon]^{<\aleph_0}$ such that:
  \begin{itemize}
  \item[(\textrm{I})] $p\restriction ((\epsilon\setminus\Delta)\times b)$ is injective;
  \item[(\textrm{II})] for all $\alpha\in\epsilon\setminus\Delta$  and $\beta\in b$, $h(\alpha)<p(\alpha,\beta)$.
  \end{itemize}

Clearly, $\Gamma^*:=\{\gamma\in\Gamma'\mid \min(A_{r_\gamma}\setminus\rho)>\gamma^*\}$ is uncountable.
For each $n<\omega$, consider the open set $U_n:=\{ \vec{l}\in(2^\omega)^d\mid \bigwedge_{i<d}(t_n(i)\s\vec{l}(i))\}$.
Then $W := \bigcap_{j=0}^\infty\bigcup_{j=n}^\infty U_{n+1}$ is a dense $G_\delta$ set,
and hence $L^d\setminus W$ contains no uncountable pairwise disjoint subfamily.
Consequently, we may find some $\gamma\in \Gamma^*$ such that $\vec{l_\gamma}$ is in $W$.
Set $b:=A_{r_\gamma}\setminus\rho$
and then find $\Delta\in[\epsilon]^{<\aleph_0}$ satisfying (\textrm{I}) and (\textrm{II}).
Fix a large enough $j<\omega$ such that $A_{q_{n+1}}\setminus A_{q_n}$ is disjoint from $\Delta$ for all $n\ge j$.
As $\vec{l_\gamma}\in W$, we may now fix some $n\ge j$ such that $\vec{l_\gamma}\in U_{n+1}$.
Denote $\bar r:=(q^\mathfrak{M})^\wedge (k, z_{n+1})$.
Then $(A_{q_{n+1}}\setminus A_{\bar r})\s(A_{q_{n+1}}\setminus A_{q_n})\s(\epsilon\setminus\Delta)$,
$(A_{r_\gamma}\setminus A_{\bar{r}}  )\s b$, and all of the following hold:
\begin{enumerate}
\item $\mathfrak M\models q_{n+1}\leq\bar r$ and $q$ is  determined;
\item the mapping $(\alpha,\beta) \mapsto p(\alpha,\beta)$  is injective over 
$(A_{q_{n+1}}\setminus  A_{\bar r})\times (A_{r_\gamma}\setminus A_{\bar{r}}  )$;
 \item $p(\alpha,\beta)>p(\alpha',\beta')$ 
for all  $(\alpha,\beta)\in (A_{q_{n+1}}\setminus  A_{\bar r})\times (A_{r_\gamma}\setminus A_{\bar{r}}  )$
and  $(\alpha',\beta')\in [A_{r_\gamma}]^2\cup [A_{q_{n+1}}]^2$.
\end{enumerate}

Since $z_{n+1}\supseteq\bar z$ and $\bar z$ was given by Lemma~\ref{Oawewlkn11},
we may apply {\bf B} and infer that ${q}_{n+1}\not\perp r_\gamma$.
However, $q_{n+1}\forces{\mathbb Q_\eta}{t_{n+1}\notin \dot{T}}$
and $r_\gamma\forces{{{\mathbb Q_\eta}} }{ \vec{l_\gamma}\text{ is a branch through }\forcingname{T} }$,
contradicting the fact that $t_{n+1}\s \vec{l_\gamma}$.
\end{proof}

\begin{proof}[Proof of Theorem \ref{nonmkjbs64hb}] Start with a model $V$ of $\gch$ in which there exists a coherent Souslin tree (see \cite[Proposition~2.5 and Theorem~3.6]{paper22}).
Using $\ch$, fix a strongly Luzin set $L$ and a partition $p$ as in Lemma~\ref{jsdbtgy}.
Let $\mathbb {Q}_{\omega_2}$ be the corresponding $p$-iteration, using $\mathcal H_{\aleph_2}$ as our bookkeeping device of names of colorings $\forcingname c_\xi$.
The iteration satisfies Property~K, being a finite-support iteration of Property~K posets,
hence the coherent Souslin tree survives. 
In addition, the $ccc$ of the iteration implies that for every coloring $c:[\omega_1]^2\to \omega$ in the extension, there is a tail of $\xi\in\omega_2$ such that $c$ admits a $\mathbb Q_\xi$-name in $\mathcal H_{\aleph_2}$ of $V$.
So,  in $V^{\mathbb {Q}_{\omega_2}}$, all colorings $c:[\omega_1]^2\rightarrow\omega$ are $p$-special.
Finally, by Lemma~\ref{LuzinPres}, the strongly Luzin set $L$ survives.
\end{proof}

\section{Acknowledgments}

Kojman was partially supported by the Israel Science Foundation (grant agreement 665/20).
Rinot was partially supported by the Israel Science Foundation (grant agreement 2066/18)
and by the European Research Council (grant agreement ERC-2018-StG 802756).
Stepr\={a}ns was partially supported by NSERC of Canada.

\end{document}